\newcommand{\norm}[1]{{\left\| #1 \right\|}}
\newcommand{\abs}[1]{{\left|{#1}\right|}}
\newcommand{\<}{\langle}
\renewcommand{\>}{\rangle}
\DeclareMathOperator{\volume}{vol}
\DeclareMathOperator{\mspan}{span}
\newcommand{\N}{\mathbb{N}}
\newcommand{\Z}{\mathbb{Z}}
\newcommand{\Q}{\mathbb{Q}}
\newcommand{\R}{\mathbb{R}}
\newcommand{\F}{\mathbb{F}}
\newcommand{\calB} {\mathcal{B}}
\newcommand{\calP} {\mathcal{P}}
\newcommand{\calQ} {\mathcal{Q}}
\theoremstyle{plain}
\newtheorem{definition}{Definition}[section]
\newtheorem{theorem}[definition]{Theorem}
\newtheorem{proposition}[definition]{Proposition}
\newtheorem{lemma}[definition]{Lemma}
\newtheorem{corollary}[definition]{Corollary}
\newtheorem{conjecture}[definition]{Conjecture}
\theoremstyle{definition}
\newtheorem{remark}[definition]{Remark}
\title{On the Probability of Generating a Lattice}
\author{Felix Fontein\footnote{Insitute of Mathematics, University of Zurich, Winterthurerstrasse 190,
    8057 Zurich, Switzerland; \texttt{felix.fontein@math.uzh.ch}}
  \and Pawel Wocjan\footnote{Mathematics Department \& Center for Theoretical Physics, Massachusetts
    Institute of Technology, Cambridge, MA 02139, USA; on sabbatical leave from Department of
    Electrical Engineering and Computer Science, University of Central Florida, Orlando, FL 32816,
    USA; \texttt{wocjan@eecs.ucf.edu}}}
\begin{document}
  \maketitle
  
  \begin{abstract}
    We study the problem of determining the probability that $m$ vectors selected uniformly at
    random from the intersection of the full-rank lattice $\Lambda$ in $\R^n$ and the window
    $[0,B)^n$ generate $\Lambda$ when $B$ is chosen to be appropriately large.  This problem plays
    an important role in the analysis of the success probability of quantum algorithms for solving
    the Discrete Logarithm Problem in infrastructures obtained from number fields and also for
    computing fundamental units of number fields.
    
    We provide the first complete and rigorous proof that $2n+1$ vectors suffice to generate
    $\Lambda$ with constant probability (provided that $B$ is chosen to be sufficiently large in
    terms of $n$ and the covering radius of $\Lambda$ and the last $n+1$ vectors are sampled from a
    slightly larger window).  Based on extensive computer simulations, we conjecture that only $n+1$
    vectors sampled from one window suffice to generate $\Lambda$ with constant success probability.
    If this conjecture is true, then a significantly better success probability of the above quantum
    algorithms can be guaranteed.
  \end{abstract}
  
  \section{Introduction}
  
  Let $G$ be a finite group.  Denote by $p_m(G)$ the probability that $m$ elements drawn uniformly
  at random from $G$ with replacement generate $G$.  The problem of determining or bounding this
  probability is of fundamental interest in group theory and has been extensively studied for
  various families of groups \cite{acciaro,pomerance-generate}.
  
  The purpose of this paper is to study a very natural generalization of this problem from finite
  abelian groups to finitely generated abelian torsion-free groups.  More precisely, we consider the
  case of lattices, i.e., discrete subgroups of $\R^n$.  The problem is now to determine the
  probability that $m$ vectors selected uniformly at random with replacement from the intersection
  of the full-rank lattice $\Lambda$ in $\R^n$ and a window $X\subset\R^n$ generate $\Lambda$.  We
  denote this probability by $p_m(\Lambda,X)$.
  
  Our study this problem of was initially motivated by its relevance to quantum algorithms and
  quantum cryptanalysis, which we explain in more detail at the end of the paper.  But we also
  believe that this problem is interesting on its own due to its appeal as a very natural and
  fundamental problem in lattice theory.  In fact, it can be viewed as a generalization of the
  following elementary problem in number theory.  For $\Lambda=\Z$ and $X=[1,B]$, the probability
  $p_m(\Lambda,X)$ corresponds to the probability that $m$ integers chosen uniformly at random from
  the set $\{1,\ldots,B\}$ with replacement are coprime.  It is known that
  $\lim_{B\rightarrow\infty} p_m(\Z,B)=1/\zeta(m)$ where $\zeta$ denotes the Riemann zeta function.
  For $\Lambda=\Z^n$ and $X$, the probability $p_m(\Lambda,X)$ is equal to the probability that the
  $m\times n$ matrix whose column vectors are selected uniformly at random from $\Lambda\cap X$ is
  unimodular.  This problem was studied for special forms of $X$ asymptotically. For $X = [-B,
  B]^n$, $B \to \infty$, it was studied by G.~Maze, J.~Rosenthal and U.~Wagner in
  \cite{naturaldensityunimodular}, and for $X = v + [-B, B]^n$, $B \to \infty$, where the entries of
  the vector $v$ are bounded polynomially in terms of $B$, by S.~Elizalde and K.~Woods in
  \cite{elizalde-woods}.  In both cases, it was shown that the limit of the probability for $B \to
  \infty$ is $\prod_{j=m-n+1}^m \zeta(j)^{-1}$.  Both works did not study the problem of bounding
  the probability in the non-asymptotic case, i.e., in the case where $B$ is fixed.
  
  In this paper, we consider the case where $\Lambda$ is an arbitrary full-rank lattice and
  $X=[0,B)^n$ for a sufficiently large but fixed $B$.  Ideally, we want to minimize $m$, while at
  the same time ensure that the probability $p_m(\Lambda,[0,B))^n$ is bounded from below by a
  nonzero constant.  We use $\nu(\Lambda)$ to denote the covering radius of $\Lambda$,
  $\lambda_1(\Lambda)$ the length of a shortest (nonzero) vector of $\Lambda$, and $\det (\Lambda)$
  the determinant of $\Lambda$.
  
  Our two major contributions to the study of this problem are: 
  \begin{theorem}
    \label{thm:main}
    Let $\Lambda$ be a lattice of full rank in $\R^n$, and assume that $B \ge 8 n^{n/2} \cdot
    \nu(\Lambda)$ and $B_1 \ge 8 n^2 (n + 1) B$. Assume that $n$ vectors are selected uniformly at
    random from $\Lambda \cap [0, B)^n$ and $n + 1$ vectors uniformly at random from $\Lambda \cap
    [0, B_1)^n$. If the vectors are sampled independently, then the probability that all these
    vectors generate $\Lambda$ is at least 
    \[ 
    	\alpha_n := \biggl( \prod_{i=2}^{n+1} \zeta(i)^{-1} - \tfrac{1}{4} \biggr) \cdot 
	\prod_{k=0}^{n-1} \biggl(1 - n^{k/2} \frac{(4 n^{n/2} + 1)^k}{(4 n^{n/2} - 1)^n}\biggr) \ge 0.092.
    \]
  \end{theorem}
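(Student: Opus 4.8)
The plan is to split the $2n+1$ vectors into two groups matching the two factors of $\alpha_n$, and to reduce the statement ``the vectors generate $\Lambda$'' to a generation question for a finite abelian group. Write $v_1,\dots,v_n$ for the vectors drawn from $\Lambda\cap[0,B)^n$ and $w_1,\dots,w_{n+1}$ for those drawn from $\Lambda\cap[0,B_1)^n$, and set $\Lambda_0:=\langle v_1,\dots,v_n\rangle$. If the $v_i$ are linearly independent then $\Lambda_0$ is a full-rank sublattice of finite index, and the full collection generates $\Lambda$ precisely when the images $\bar w_1,\dots,\bar w_{n+1}$ generate the finite abelian quotient $G:=\Lambda/\Lambda_0$. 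Since $G$ is a quotient of $\Z^n$ it has rank at most $n$, so the target probability factors, up to the error incurred below, as (probability the $v_i$ are independent) $\times$ (probability $n+1$ near-uniform elements generate a rank-$\le n$ finite abelian group). I would then identify these two factors with the second product and the bracketed term of $\alpha_n$, respectively.

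For the first factor I would estimate, for each $k$, the conditional probability that $v_{k+1}$ escapes the span $V_k$ of the previous $k$ (independent) vectors. This is one minus the ratio of the number of points of $\Lambda\cap[0,B)^n$ lying on $V_k$ to the total number in the window. The hypothesis $B\ge 8n^{n/2}\nu(\Lambda)$, equivalently $B/(2\nu(\Lambda))\ge 4n^{n/2}$, is exactly what lets me lower-bound the total count by $(4n^{n/2}-1)^n$ and upper-bound the count on a $k$-dimensional rational subspace by a quantity of the form $n^{k/2}(4n^{n/2}+1)^k$, the extra $n^{k/2}$ coming from the determinant of the induced sublattice on $V_k$. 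Multiplying the per-step bounds over $k=0,\dots,n-1$ then yields exactly $\prod_{k=0}^{n-1}\bigl(1-n^{k/2}(4n^{n/2}+1)^k/(4n^{n/2}-1)^n\bigr)$. A useful by-product is that each $v_i\in[0,B)^n$ has length at most $\sqrt n\,B$, so Hadamard's inequality bounds $\det\Lambda_0\le (\sqrt n\,B)^n$; this control on $\Lambda_0$ is what I will feed into the second stage.

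For the bracketed factor I would first prove the clean group-theoretic bound that $n+1$ \emph{uniformly} random elements of any finite abelian group of rank $\le n$ generate it with probability at least $\prod_{p}\prod_{j=2}^{n+1}(1-p^{-j})=\prod_{j=2}^{n+1}\zeta(j)^{-1}$, the worst case being $(\Z/p\Z)^n$ at each prime, where one needs the reductions to span $\F_p^n$. The $w_i$, however, are not uniform on $G=\Lambda/\Lambda_0$: they arise by reducing points of $\Lambda\cap[0,B_1)^n$ modulo $\Lambda_0$. I would bound the statistical distance of this induced distribution from uniform by a lattice-point-counting argument, comparing, across the cosets of $\Lambda_0$, the counts of window points; the hypothesis $B_1\ge 8n^2(n+1)B$ guarantees $B_1$ dwarfs the diameter $\lesssim\sqrt n\,B$ of a fundamental domain of $\Lambda_0$, forcing the per-coset counts to be nearly equal. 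A union bound over the $n+1$ samples then shows the total deviation from the uniform model is at most $\tfrac14$, which is absorbed as the $-\tfrac14$ in $\alpha_n$.

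The main obstacle is this near-uniformity estimate of the second stage: I must count points of $\Lambda$ inside $[0,B_1)^n$ falling in each coset of $\Lambda_0$ and show these counts agree up to a small relative error, an estimate in which the covering radius $\nu(\Lambda)$ (to guarantee each coset is represented), the basis lengths of $\Lambda_0$ (bounded by the first stage), and the window size $B_1$ must be balanced simultaneously; pinning the allowable error at precisely $\tfrac14$ is the delicate point. Finally I would combine the two independent stages multiplicatively and verify the numerical claim: the factor $\prod_{j=2}^{n+1}\zeta(j)^{-1}$ decreases to the constant $\prod_{j\ge2}\zeta(j)^{-1}\approx0.436$, so the bracket exceeds $0.185$ for all $n$, while the correction product is bounded below uniformly in $n$ (its largest term, at $k=n-1$, is $\approx 1/(4\sqrt n)$), giving $\alpha_n\ge 0.092$.
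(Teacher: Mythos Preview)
Your proposal is correct and follows essentially the same two-stage strategy as the paper: first bound the probability that the $n$ small-window vectors are linearly independent via hyperplane point-counting with the covering radius (yielding the product over $k$), then bound the probability that the $n+1$ large-window vectors generate the finite quotient $\Lambda/\Lambda_0$ by combining the $\prod_{j=2}^{n+1}\zeta(j)^{-1}$ bound for uniform samples with a total-variation estimate, union-bounded over the $n+1$ samples to give the $-\tfrac14$. The only cosmetic differences are that the paper derives the $n^{k/2}$ from a volume bound on the hyperplane-cube intersection (not from the induced sublattice determinant) and controls $\Lambda_0$ in stage two via $\nu(\Lambda_0)\le\tfrac{\sqrt n}{2}\lambda_n(\Lambda_0)\le\tfrac{n}{2}B$ rather than Hadamard, but these lead to the same inequalities.
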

  
  Unfortunately, our current approach requires $m=2n+1$ samples and two windows of different sizes
  to be able to prove that the probability of generating the lattice $\Lambda$ is bounded from below
  by a non-zero constant.  However, based on extensive numerical evidence, we formulate the
  following conjecture, which states that only $m=n+1$ samples and only one window size suffice to
  attain a constant probability of generating the lattice.
  \begin{conjecture}
    For every $n \in \N$, there exists a constant~$0 < c_n < 1$ and a rational function $f_n \in
    \R(x, y)$ satisfying \[ \forall x_0 > 0 \, \forall y_0 \in \bigl(0, x_0^{1/n}\bigr] : \,
    \sup\bigl\{ f_n(x, y) \bigm| 0 < x \le x_0, \, y_0 \le y \le x^{1/n} \bigr\} < \infty \] such
    that the following holds:
    
    Let $\Lambda$ be a lattice in $\R^n$ and let $B > f_n(\det \Lambda, \lambda_1(\Lambda))$. Then
    the probability that $n + 1$ vectors chosen uniformly at random from $\Lambda \cap [0, B)^n$
    generate the lattice~$\Lambda$ is at least $c_n$.
    Moreover, the constant~$c_n$ can be chosen close to $\prod_{k=2}^{n+1} \zeta(k)^{-1}$.
  \end{conjecture}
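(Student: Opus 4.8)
The plan is to reduce the generation condition to a family of local conditions indexed by the primes and to control each by an equidistribution count. After fixing a basis of $\Lambda$ and identifying $\Lambda \cong \Z^n$, the sampled vectors $v_0,\dots,v_n$ generate $\Lambda$ if and only if, for every prime $p$, their reductions span $\Lambda/p\Lambda \cong \F_p^n$. Were the vectors \emph{exactly} uniform on $\Lambda/p\Lambda$, the probability of spanning would be $\prod_{j=2}^{n+1}(1-p^{-j})$, and were the reductions independent across primes the total probability would be $\prod_p \prod_{j=2}^{n+1}(1-p^{-j}) = \prod_{k=2}^{n+1}\zeta(k)^{-1}$, which is precisely the target constant. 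The whole proof therefore amounts to quantifying the two idealizations ``uniform mod $p$'' and ``independent across $p$'' with errors that are summable and can be absorbed into the choice of $c_n$ and $f_n$.

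I would make this precise by inclusion--exclusion over finite-index sublattices. Writing $A_p$ for the event that $v_0,\dots,v_n$ span mod $p$, we have $p_{n+1}(\Lambda,[0,B)^n)=\Pr[\bigcap_p A_p]$, which expands (through the Möbius function of the sublattice poset, equivalently through the gcd of the maximal minors of the $n\times(n+1)$ coordinate matrix) into an alternating sum over squarefree products of primes of the probabilities $\Pr[\text{all }v_i\in\Lambda']$ for finite-index sublattices $\Lambda'$. Each such probability equals $q_{\Lambda'}^{\,n+1}$, where $q_{\Lambda'}:=|\Lambda'\cap[0,B)^n|/|\Lambda\cap[0,B)^n|$ is the fraction of window points lying in $\Lambda'$. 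The arithmetic core is a counting estimate of the form $|\Lambda'\cap[0,B)^n| = \tfrac{1}{d}|\Lambda\cap[0,B)^n|\bigl(1+\varepsilon_{\Lambda'}\bigr)$ for an index-$d$ sublattice, with $\varepsilon_{\Lambda'}$ bounded by the ratio of the inter-layer spacing of the defining functional to $B$.

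Next I would split the primes into small ($p\le P$) and large ($p>P$). For small primes the equidistribution error is negligible once $B$ is large, so the partial product over $p\le P$ lies within any prescribed tolerance of $\prod_{p\le P}\prod_{j=2}^{n+1}(1-p^{-j})$. For large primes I would bound the total failure probability by a union bound over the at most $(p^n-1)/(p-1)$ hyperplane-kernels, giving $\sum_{p>P}\Pr[\neg A_p]\le\sum_{p>P}\frac{p^n-1}{p-1}\max_\phi q_{\ker\phi}^{\,n+1}$. Under a uniform guarantee $q_{\ker\phi}\le C_n/p$, each summand is $O(p^{-2})$ and the tail is $O(1/(P\log P))$, so choosing $P$, and hence $B$, large enough in terms of $n$, $\det\Lambda$ and $\lambda_1(\Lambda)$ drives the loss below the margin separating $\prod_{k=2}^{n+1}\zeta(k)^{-1}$ from $c_n$. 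The rational function $f_n$ would be assembled from these thresholds, and its admissibility condition $\sup\{f_n\mid 0<x\le x_0,\ y_0\le y\le x^{1/n}\}<\infty$ reflects exactly that the required $B$ may grow as $\det\Lambda\to\infty$ or $\lambda_1(\Lambda)\to0$ yet stays bounded on any compact regime.

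The hard part will be establishing the uniform bound $q_{\ker\phi}\le C_n/p$ for \emph{all} large primes and \emph{all} functionals $\phi$ using only $\det\Lambda$ and $\lambda_1(\Lambda)$; this is the step the present techniques do not resolve, and is the reason the statement remains conjectural. For such $\phi$, the set $\ker\phi$ is a stack of parallel layers whose spacing is the reciprocal of the length of the associated primitive dual vector, so when that dual vector is short the window meets few layers and $q_{\ker\phi}$ can substantially exceed $1/p$. Transference relates these dual lengths to the covering radius $\nu(\Lambda)$, which is precisely the quantity the theorem exploits but which the conjecture forbids; controlling the worst oversampling ratio over the full infinite family of sublattices, uniformly in $p$ and purely in terms of $\det\Lambda$ and $\lambda_1(\Lambda)$, is the crux on which a complete proof would hinge.
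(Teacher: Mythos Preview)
This statement is a \emph{Conjecture}: the paper does not prove it, and so there is no paper proof to compare your proposal against. The only case the paper settles is $n=1$, via the explicit coprimality count in the proposition of Section~3; for general $n$ the paper offers numerical evidence and points to the asymptotic results of Maze--Rosenthal--Wagner and Elizalde--Woods for the special windows $[-B,B]^n$ and $v+[-B,B]^n$, while explicitly leaving the conjecture open.

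Your proposal is accordingly a strategy sketch rather than a proof, and you yourself flag the gap. The local-to-global outline you describe (reduce to spanning modulo each prime~$p$, split primes into small and large, handle small primes by equidistribution and large primes by a union bound over hyperplane kernels) is the natural line of attack and is essentially the method behind the cited asymptotic results; the paper itself remarks that one might be able to extract effective bounds from the Elizalde--Woods argument in the special case $X=v+[-B,B]^n$, but that the general-parallelepiped case remains unclear. Your identification of the crux---a uniform estimate $q_{\ker\phi}\le C_n/p$ valid simultaneously for all large~$p$ and all functionals~$\phi$---is accurate: for any fixed $B$ there exist primes $p$ and $\phi$ for which $\ker\phi$ meets the window in far more than a $1/p$ fraction of the lattice points, and this is exactly what blocks the tail bound.

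One minor correction: the conjecture does not ``forbid'' the covering radius. The paper records the inequality $\nu(\Lambda)\le\tfrac{1}{2}n^{n/2+1}\det\Lambda/\lambda_1(\Lambda)^{n-1}$, so any threshold expressed through $\nu(\Lambda)$ can be rewritten as a rational function of $\det\Lambda$ and $\lambda_1(\Lambda)$, as the conjecture allows. The genuine gap between Theorem~1.1 and the conjecture is not the choice of lattice invariants but the drop from $2n+1$ samples drawn from two nested windows to $n+1$ samples drawn from a single window; with only $n+1$ samples the union-bound tail over large primes is only barely summable (order $p^{-2}$ per prime), and controlling the implied constant uniformly in~$p$ is precisely the step you isolate as unresolved.
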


  \section{Solving the Lattice Generation Problem}
  \label{S:solving}
    
  We break down the lattice generation problem into two subproblems. First, we consider the
  probability that $n$~vectors sampled uniformly at random from $\Lambda$ generate a
  sublattice~$\Lambda_1$ of full rank, i.e. do not lie in a hyperplane. Then, we compute the
  probability that the residue classes of the next $n + 1$~vectors generate the finite abelian
  quotient group~$\Lambda / \Lambda_1$. Finally, we combine these two results.
  
  In the following, we assume that $n > 1$. We discuss a result for the case $n = 1$ in
  Section~\ref{S:Conjecture}.
  
  The idea to prove a lower bound on the probability by considering the above two steps was proposed
  by A.~Schmidt in \cite{arthurDiss}. We present a correct proof of the problem arising in the first
  step, fixing a mistake in Schmidt's proof. Our approach to analyzing the problem arising in the
  second step is entirely different from the approach undertaken by Schmidt. The differences will be
  discussed in Sections~\ref{S:fullsublattice} and \ref{S:final}.
  
  \subsection{Generating a Sublattice of Full Rank}
  \label{S:fullsublattice}
  
  Note that $\lambda_1, \dots, \lambda_n \in \Lambda \cap [0, B)^n$ generate a sublattice of full
  rank if and only if they are linearly independent over $\R$. This is the case if $\lambda_i$ is
  not contained in the $(i - 1)$-dimensional hyperplane spanned by $\lambda_1, \dots,
  \lambda_{i-1}$. Thus to bound the probability that $n$ uniformly random vectors from $\Lambda \cap
  [0, B)^n$ generate a full rank sublattice, we bound the number of lattice elements in the
  intersection as well as the number of lattice elements lying both in the intersection and a
  $k$-dimensional hyperplane, $1 \le k < n$. We find such bounds using Voronoi cells; see also
  Section~1.2 of Chapter~8 in \cite{micciancio-goldwasser}. To state the results, we need to
  introduce some notation, most notably the \emph{covering radius} of a lattice.
    
  Let $\Lambda$ be a lattice in $\R^n$ of full rank. For $\lambda \in \Lambda$, let \[
  V_\Lambda(\lambda) = \{ x \in \R^n \mid \forall \lambda' \in \Lambda \setminus \{ \lambda \} : \|x
  - \lambda\|_2 < \|x - \lambda'\|_2 \} \] be its (open) Voronoi cell. We know that
  $V_\Lambda(\lambda)$ is contained in an open ball of radius $\nu(\Lambda)$ centered around
  $\lambda$, where $\nu(\Lambda)$ is the covering radius of $\Lambda$, and that the volume of
  $V_\Lambda(\lambda)$ is $\det \Lambda$. Moreover, if $\lambda \neq \lambda'$, $V_\Lambda(\lambda)
  \cap V_\Lambda(\lambda') = \emptyset$, and $\bigcup_{\lambda \in \Lambda}
  \overline{V_\Lambda(\lambda)} = \R^n$. Details can be found in
  \cite[Chapter~8]{micciancio-goldwasser}.
  
  Note that $\nu(\Lambda) \le \frac{1}{2} n^{n/2+1} \frac{\det \Lambda}{\lambda_1(\Lambda)^{n-1}}$,
  where $\lambda_1(\Lambda)$ denotes the first successive minimum of $\Lambda$
  \cite{micciancio-goldwasser}, i.e.~the length of a shortest nonzero vector in $\Lambda$.
  
  \begin{lemma}
    \label{L:voronoilemma1}
    If $B > 2 \nu(\Lambda)$. Then \[ \frac{(B - 2\nu(\Lambda))^n}{\det \Lambda} \le \abs{\Lambda
    \cap [0, B)^n} \le \frac{(B + 2\nu(\Lambda))^n}{\det \Lambda}. \]
  \end{lemma}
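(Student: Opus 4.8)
The plan is to translate both inequalities into volume comparisons using the three recalled properties of Voronoi cells: each cell $V_\Lambda(\lambda)$ has volume $\det \Lambda$, each is contained in the open ball of radius $\nu(\Lambda)$ about $\lambda$, and distinct cells are pairwise disjoint while the closed cells tile $\R^n$.

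For the upper bound I would attach to every $\lambda \in \Lambda \cap [0,B)^n$ its cell $V_\Lambda(\lambda)$. Since $\lambda \in [0,B)^n$ and the cell sits inside the ball of radius $\nu(\Lambda)$ about $\lambda$, every coordinate of a point of the cell lies strictly between $-\nu(\Lambda)$ and $B + \nu(\Lambda)$; hence the cell is contained in the enlarged box $(-\nu(\Lambda), B + \nu(\Lambda))^n$, whose side length is $B + 2\nu(\Lambda)$. The cells attached to distinct lattice points are pairwise disjoint, so summing their volumes and comparing with the volume of the enlarged box yields $\abs{\Lambda \cap [0,B)^n} \cdot \det \Lambda \le (B + 2\nu(\Lambda))^n$, which is the claimed upper bound.

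For the lower bound I would argue in the opposite direction, showing that the shrunken box $S := [\nu(\Lambda), B - \nu(\Lambda))^n$ — which is nonempty precisely because $B > 2\nu(\Lambda)$ — is covered by the closed Voronoi cells of the lattice points lying in $[0,B)^n$. Given $x \in S$, the tiling property places $x$ in $\overline{V_\Lambda(\lambda)}$ for some $\lambda \in \Lambda$, and then $\norm{x - \lambda} \le \nu(\Lambda)$ forces $\abs{x_i - \lambda_i} \le \nu(\Lambda)$ in each coordinate; combined with $x_i \in [\nu(\Lambda), B - \nu(\Lambda))$ this gives $\lambda_i \in [0, B)$, so $\lambda \in \Lambda \cap [0,B)^n$. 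Consequently $S \subseteq \bigcup_{\lambda \in \Lambda \cap [0,B)^n} \overline{V_\Lambda(\lambda)}$, and comparing the volume of $S$ with the sum of the (equal) cell volumes gives $(B - 2\nu(\Lambda))^n \le \abs{\Lambda \cap [0,B)^n} \cdot \det \Lambda$, the claimed lower bound.

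The routine parts are the two volume estimates once the containments are in place; the step requiring care is the covering argument for the lower bound. There one must check that every coordinate of $\lambda$ genuinely falls in the half-open interval $[0,B)$ (so that $\lambda$ is actually counted in $\Lambda \cap [0,B)^n$), and one must address the fact that the closed cells overlap along their boundaries. The latter causes no loss, since the overlaps have measure zero and the volume of the union is still bounded above by the sum of the volumes; keeping the open/half-open conventions consistent between the boxes and the cells is the only genuine bookkeeping obstacle.
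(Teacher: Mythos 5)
Your proposal is correct and follows essentially the same route as the paper: the upper bound via disjoint Voronoi cells of points in $[0,B)^n$ fitting inside the enlarged box, and the lower bound via the shrunken box $[\nu(\Lambda), B-\nu(\Lambda))^n$ being covered by (closures of) cells whose centers are forced to lie in $[0,B)^n$. Your extra care about closed-cell overlaps having measure zero and about the half-open interval bookkeeping is sound and merely makes explicit what the paper leaves implicit.
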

  
  \begin{lemma}
    \label{L:voronoilemma2}
    Let $B > 0$ and $H$ be a $k$-dimensional hyperplane, $1 \le k < n$. Then \[ \abs{\Lambda \cap H
    \cap [0, B)^n} \le \frac{n^{k/2} (B + 2 \nu(\Lambda))^k (2 \nu(\Lambda))^{n-k}}{\det
    \Lambda}. \]
  \end{lemma}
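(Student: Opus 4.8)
The plan is to bound the number of lattice points lying in both the window and a $k$-dimensional hyperplane by covering that intersection with disjoint Voronoi cells, exactly as in Lemma~\ref{L:voronoilemma1}, but accounting for the fact that the relevant points are confined near a $k$-dimensional affine subspace. First I would observe that for each $\lambda \in \Lambda \cap H \cap [0,B)^n$, its Voronoi cell $V_\Lambda(\lambda)$ is contained in the open ball of radius $\nu(\Lambda)$ centered at $\lambda$, so the union of these cells is contained in the tube $T = \{ x \in \R^n \mid \dist(x, H) < \nu(\Lambda) \} \cap (\text{a suitable enlarged box})$. Since the cells are pairwise disjoint and each has volume $\det\Lambda$, the cardinality $\abs{\Lambda \cap H \cap [0,B)^n}$ is at most $\volume(T)/\det\Lambda$, and the whole task reduces to estimating $\volume(T)$.

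To estimate this tube volume I would split the ambient coordinates according to the geometry of $H$. Because every point $\lambda$ in question lies in $H \cap [0,B)^n$, the projection of the union of Voronoi cells onto $H$ is contained in a $k$-dimensional region of diameter controlled by the box, which I would bound by a $k$-dimensional box of side length $B + 2\nu(\Lambda)$ (the $+2\nu(\Lambda)$ absorbing the radius of each ball in the $k$ directions along $H$); the factor $n^{k/2}$ enters here because passing from the axis-aligned window to a box adapted to an arbitrary $k$-dimensional subspace costs a factor of $n^{1/2}$ per dimension when comparing $\ell_\infty$ diameters along $H$ to those of the coordinate box. In the remaining $n-k$ directions orthogonal to $H$, every cell stays within distance $\nu(\Lambda)$ of $H$, so each such coordinate is confined to an interval of length $2\nu(\Lambda)$, contributing $(2\nu(\Lambda))^{n-k}$. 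Multiplying the $k$-dimensional and $(n-k)$-dimensional extents gives $\volume(T) \le n^{k/2}(B + 2\nu(\Lambda))^k (2\nu(\Lambda))^{n-k}$, and dividing by $\det\Lambda$ yields the claimed bound.

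I expect the main obstacle to be making the factor $n^{k/2}$ rigorous, since it arises precisely from relating diameters and volumes measured along an \emph{arbitrary} $k$-dimensional subspace $H$ to those measured in the fixed coordinate system of the window $[0,B)^n$. The cleanest way I would handle this is to choose an orthonormal basis adapted to $H$ and its orthogonal complement, bound the extent of the projected region in each of the $k$ basis directions of $H$ by the diameter of $[0,B)^n$ enlarged by the ball radius, and then track how the change of basis inflates the coordinate bounds by at most $\sqrt{n}$ in each direction. Care is also needed because the center $\lambda$ may lie anywhere in $H \cap [0,B)^n$ while its cell can protrude slightly outside the window; this is exactly what the $+2\nu(\Lambda)$ and $2\nu(\Lambda)$ slacks are designed to absorb, and I would verify that the enlarged tube genuinely contains every Voronoi cell under consideration before applying the volume-to-count comparison.
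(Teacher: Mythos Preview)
Your proposal is correct and follows essentially the same route as the paper: enclose each Voronoi cell in the tube around $H$ intersected with the enlarged box, split the volume as a $k$-dimensional factor along $H$ times $(2\nu(\Lambda))^{n-k}$ in the orthogonal directions, and obtain the $n^{k/2}$ factor by picking an orthonormal basis $b_1,\dots,b_k$ of $H$ and bounding the extent in each $b_i$-direction via $\|b_i\|_1 \le \sqrt{n}$. The only cosmetic difference is that the paper phrases the $k$-dimensional piece as the slice $Y = H \cap [-\nu(\Lambda), B+\nu(\Lambda))^n$ rather than as a projection, but the inner-product computation it uses to bound $\volume_k(Y)$ is exactly the one you outline.
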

  
  The proofs are similar to the one of Proposition~8.7 in \cite{micciancio-goldwasser}:
  
  \begin{proof}[Proof of Lemma~\ref{L:voronoilemma1}.]
    \label{P:voronoilemma1}
    If $\lambda \in \Lambda$ satisfies $V_\Lambda(\lambda) \cap [\nu(\Lambda), B - \nu(\Lambda))^n
    \neq \emptyset$, then we must have $\lambda \in [0, B)^n$. Therefore, $ (B - 2 \nu(\Lambda))^n /
    \det \Lambda \le \abs{\Lambda \cap [0, B)^n}$.
    
    If $\lambda \in \Lambda \cap [0, B)^n$, then we must have $V_\Lambda(\lambda) \subseteq
    [-\nu(\Lambda), B + \nu(\Lambda))^n$.  Therefore, $\abs{\Lambda \cap [0, B)^n} \le (B +
    2\nu(\Lambda))^n / \det \Lambda$.
  \end{proof}
  
  \begin{proof}[Proof of Lemma~\ref{L:voronoilemma2}.]
    \label{P:voronoilemma2}
    Let $\lambda \in \Lambda \cap H \cap [0, B)^n$. Then $V_\Lambda(\lambda) \subseteq X :=
    [-\nu(\Lambda), B + \nu(\Lambda))^n \cap (H + \mathrm{B}_{\nu(\Lambda)}(0))$, where
    $\mathrm{B}_{\nu(\Lambda)}(0)$ is a ball of radius~$\nu(\Lambda)$ centered around 0. Therefore,
    $\abs{\Lambda \cap H \cap [0, B)^n} \le \volume(X) / \det \Lambda$, and we have to estimate
    $\volume(X)$.
    
    Clearly, if $\volume_k(Y)$ denotes the $k$-dimensional volume of $Y := H \cap [-\nu(\Lambda), B
    + \nu(\Lambda))^n$, we have that $\volume(X) \le \volume_k(Y) \cdot (2 \nu(\Lambda))^{n -
    k}$. (In fact, we can replace $(2 \nu(\Lambda))^{n - k}$ by the volume of an $(n -
    k)$-dimensional ball of radius~$\nu(\Lambda)$.)
    
    Let $b_1, \dots, b_k$ be an orthonormal basis of $H$. Set $T := \{ (x_1, \dots, x_k) \in \R^k
    \mid \sum_{i=1}^k x_i b_i \in [-\nu(\Lambda), B + \nu(\Lambda))^n \}$; then $\volume(T) =
    \volume_k(Y)$. A point $y \in Y$ corresponds to $(\< y, b_1 \>, \dots, \< y, b_k \>) \in
    T$. Write $b_i = (b_{i1}, \dots, b_{in})$ and $y = (y_1, \dots, y_n) \in [-\nu(\Lambda), B +
    \nu(\Lambda))^n$, set $A_{ij} := B + \nu(\Lambda)$ if $b_{ij} \ge 0$ and $A_{ij} :=
    \nu(\Lambda)$ if $b_{ij} < 0$. Then
    \[ 
    	\sum_{j=1}^n |b_{ij}| (A_{ij} - (B + 2\nu(\Lambda))) \le \< y, b_i \> = 
    	\sum_{j=1}^n y_j b_{ij} \le \sum_{j=1}^n |b_{ij}| A_{ij}, 
    \] 
    implying that $\< y, b_i \>$ ranges over an interval of length $\norm{b_i}_1 (B + 2
    \nu(\Lambda)) \le \sqrt{n} (B + 2 \nu(\Lambda))$. Therefore,
    \[ 
    	\volume(T) \le n^{k/2} (B + 2 \nu(\Lambda))^k. \qedhere
    \]
  \end{proof}
  
  The lemmas allow us to find the following bound on the probability that $n$ random vectors
  generate a sublattice of full rank:
  
  \begin{corollary}
    \label{cor:probGenFullRankSubLattice}
    Assume that $B \ge 8 n^{n/2} \cdot \nu(\Lambda)$. Let
    \begin{align*}
      X :={} & (\Lambda \cap [0, B)^n)^n \\
      \text{and} \quad Y :={} & \{ (y_1, \dots, y_n) \in X \mid \mspan_\R(y_1, \dots, y_n) =
      \R^n \}.
    \end{align*}
    Then $\abs{Y} \ge \tfrac{1}{2} \abs{X}$.
  \end{corollary}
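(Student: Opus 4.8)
The plan is to show that the complementary failure probability --- that $n$ independent uniform samples from $\Lambda \cap [0,B)^n$ fail to span $\R^n$ --- is at most $\tfrac12$. Write $N := \abs{\Lambda \cap [0,B)^n}$, so that $\abs{X} = N^n$, and for a sample $y_1, \dots, y_n$ let $E_i$ denote the event $y_i \in \mspan_\R(y_1, \dots, y_{i-1})$, with the convention that the span of the empty tuple is $\{0\}$. A tuple lies in $Y$ exactly when none of the $E_i$ occur, so I would estimate $\abs{Y}/\abs{X}$ by the sequential product $\prod_{i=1}^n P(E_i^c \mid E_1^c, \dots, E_{i-1}^c)$. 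The point of conditioning on independence of the earlier vectors is that their span $V := \mspan_\R(y_1, \dots, y_{i-1})$ is then a subspace of dimension \emph{exactly} $k = i-1$, and since $y_i$ is uniform and independent, $P(E_i \mid y_1, \dots, y_{i-1}) = \abs{\Lambda \cap V \cap [0,B)^n}/N$ with a bound that is uniform over all such configurations.

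The core step is to estimate this conditional probability with the two Voronoi-cell lemmas. Lemma~\ref{L:voronoilemma2} bounds the numerator by $n^{k/2}(B + 2\nu(\Lambda))^k (2\nu(\Lambda))^{n-k}/\det\Lambda$; the boundary case $k=0$ is not covered by that lemma, but the same expression remains valid there because a cube of side $2\nu(\Lambda)$ contains a ball of radius $\nu(\Lambda)$, which contains a Voronoi cell of volume $\det\Lambda$, forcing $(2\nu(\Lambda))^n \ge \det\Lambda$ so that the bound exceeds the true count $1$. Lemma~\ref{L:voronoilemma1} gives $N \ge (B - 2\nu(\Lambda))^n/\det\Lambda$. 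Dividing, the factor $\det\Lambda$ cancels and yields
\[ P(E_i \mid y_1, \dots, y_{i-1}) \le \frac{n^{k/2}(B + 2\nu(\Lambda))^k (2\nu(\Lambda))^{n-k}}{(B - 2\nu(\Lambda))^n}, \qquad k = i-1. \]

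I would then invoke the hypothesis $B \ge 8 n^{n/2}\nu(\Lambda)$, i.e.\ $2\nu(\Lambda) \le B/(4n^{n/2})$. For fixed $B$ and $k \le n-1$ the right-hand side is a product of nonnegative increasing functions of $\nu(\Lambda)$, hence is maximized at $2\nu(\Lambda) = B/(4n^{n/2})$; substituting this value cancels every power of $B$ and leaves exactly $p_k := n^{k/2}(4n^{n/2}+1)^k/(4n^{n/2}-1)^n$. Consequently $\abs{Y}/\abs{X} \ge \prod_{k=0}^{n-1}(1 - p_k)$, which is precisely the second factor of $\alpha_n$ in Theorem~\ref{thm:main}.

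It then remains to verify that this product is at least $\tfrac12$. Using $\prod_{k=0}^{n-1}(1-p_k) \ge 1 - \sum_{k=0}^{n-1} p_k$, I would note that $p_{k+1}/p_k = n^{1/2}(4n^{n/2}+1) \gg 1$, so the geometric-type sum is dominated by its last term $p_{n-1}$, which is of order $1/(4\sqrt n)$; a short computation confirms $\sum_{k} p_k \le \tfrac12$ for every $n \ge 2$ (indeed the product already exceeds $0.7$ at $n=2$ and tends to $1$). I expect the only delicate points to be this final numerical estimate, the monotonicity-in-$\nu(\Lambda)$ observation, and the separate treatment of the $k=0$ case; all the geometric substance is contained in the two lemmas.
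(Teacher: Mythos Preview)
Your proposal is correct and follows essentially the same route as the paper: bound each conditional probability $P_k$ via Lemmas~\ref{L:voronoilemma1} and~\ref{L:voronoilemma2}, then use $\prod_{k=0}^{n-1}(1-P_k)\ge 1-\sum_k P_k$ and show the sum is below $\tfrac12$. The only cosmetic differences are that you substitute the extremal value $2\nu(\Lambda)=B/(4n^{n/2})$ immediately via monotonicity (the paper keeps $j=B/\nu(\Lambda)$ as a parameter and bounds later), and you explicitly justify the $k=0$ case that the paper's proof silently includes; your final numerical estimate is sketched where the paper writes out the geometric-sum bound in full, but the claim is correct.
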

  
  \begin{proof}
    \label{P:probGenFullRankSubLattice}
    Assume that $y_1, \dots, y_k \in X$ are linearly independent, $0 \le k < n$. We bound the
    probability from above that $y_{k+1} \in X$ is not contained in the hyperplane generated by
    $y_1, \dots, y_k$, which is of dimension~$k$. Write $B = j \cdot \nu(\Lambda)$ with $j \ge 8
    n^{n/2}$. By Lemmas~\ref{L:voronoilemma1} and \ref{L:voronoilemma2}, the probability that
    $y_{k+1}$ is in a $k$-dimensional hyperplane is bounded from above by
    \[
      P_k := \frac{n^{k/2} (B + 2 \nu(\Lambda))^k (2 \nu(\Lambda))^{n-k}}{\det \Lambda} \cdot
      \frac{\det \Lambda}{(B - 2 \nu(\Lambda))^n} = n^{k/2} \frac{(j + 2)^k 2^{n - k}}{(j - 2)^n}.
    \]
    The success probability is bounded from below by $\prod_{k=0}^{n-1} (1-P_k)$. Using induction on
    $n$, we can prove that
    \[
    \prod_{k=0}^{n-1} (1-P_k) \ge 1 - \sum_{k=0}^{n-1} P_k. 
    \]
    The sum $\sum_{k=0}^{n-1} P_k$ can be bounded from above as follows:
    \begin{align*}
    \sum_{k=0}^{n-1} P_k 
    & = 
    \frac{2^n}{(j-2)^n} \sum_{k=0}^{n-1} \left( \frac{\sqrt{n}(j+2)}{2}\right)^k \\
    & = 
    \frac{2^n}{(j-2)^n}  \left[\left( \frac{\sqrt{n}(j+2)}{2}\right)^n - 1\right]
    \left[\left( \frac{\sqrt{n}(j+2)}{2}\right) - 1\right]^{-1} \\
    & <
    \frac{2^n}{(j-2)^n}  \left( \frac{\sqrt{n}(j+2)}{2}\right)^n 
    \left[\left( \frac{\sqrt{n}(j+2)}{2}\right) - 1\right]^{-1} \\
    & =
    n^{n/2} \left( 1 + \frac{4}{j-2}\right)^n 
    \left[\left( \frac{\sqrt{n}(j+2)}{2}\right) - 1\right]^{-1}.
    \end{align*}
    Now $\bigl( 1 + \frac{4}{j-2}\bigr)^n \le \exp(\frac{4n}{j-2}) \le \exp(\frac{4}{8 n^{n/2-1} -
    2/n}) \le 2$ for all $n \ge 1$ and $\sqrt{n}(j+2)/2 - 1 \ge j/2$, whence \[ \sum_{k=0}^{n-1} P_k
    < 2 n^{n/2} \cdot \frac{2}{j} \le \frac{4 n^{n/2}}{8 n^{n/2}} = \frac{1}{2}. \qedhere \]
  \end{proof}
  
  Note that our lower bound is far from optimal. If one considers the value~$\sum_{k=0}^{n-1} P_k$
  from the proof and substitutes $j$ by $8 n^{n/2}$, one obtains the lower bound \[
  \prod_{k=0}^{n-1} \biggl( 1 - n^{k/2} \frac{(4 n^{n/2} + 1)^k} {(4 n^{n/2} - 1)^n} \biggr). \] For
  $n = 1$ this is $\frac{2}{3}$, and the product grows to $1$ for $n \to \infty$. For small~$n$, the
  values are:
  \begin{center}
    \begin{tabular}{r|ccccccc}
      Dimension~$n$ & 1 & 2 & 3 & 4 & 5 & 6 & 7 \\\hline
      Lower bound & 0.666 & 0.725 & 0.812 & 0.859 & 0.883 & 0.896 & 0.905
    \end{tabular}
  \end{center}

  \begin{remark}
    The basic idea of the proof of this corollary is similar to the proof of the first part of
    Satz~2.4.23 in \cite{arthurDiss}. Note that the proof in \cite{arthurDiss} is not correct: the
    ratio $\abs{M_{i-1} \cap \calB}/ \abs{M_i \cap \calB}$ considered in the proof can be
    $> \frac{1}{2}$; for example, consider $r = 3$, $M = \Z^3$, $n > 0$ arbitrary (in
    \cite{arthurDiss}, $n \nu(M)$ is what we denote by $b$, i.e., $\calB = [0, n \nu(M))^n$), $x_1 =
    (1, n \nu(M) - 1, 0)$, $x_2 = (0, 1, n \nu(M) - 1)$, $x_3 = (0, 0, 1)$; then $M_1 \cap \calB$
    contains two elements, while $M_2 \cap \calB$ contains three elements. Therefore,
    $\abs{M_1 \cap \calB} / \abs{M_2 \cap \calB} = \frac{2}{3} > \frac{1}{2}$. The problem is
    that $\det M_i$ cannot be bounded linearly in terms of $\nu(M)$ and $\det M_{i-1}$, as it was
    claimed in that proof; in this example, $\det M_1 = \sqrt{1 + (n - 1)^2}$, $\det M_2 = \sqrt{1 +
    (n - 1)^2 + (n - 1)^4}$ and $\nu(M) = 1$. In our proof, we proceed differently by considering
    the ratio $\abs{M_i \cap \calB}/ \abs{M \cap \calB}$ directly, and both our bound on
    the probability and our bound on the minimal size of $\calB$ is in fact better than the
    corresponding bounds given in \cite{arthurDiss}.
  \end{remark}
  
  \subsection{Generating a Finite Abelian Group}
  \label{S:groupgen}
  
  In case $\Lambda_1$ is a sublattice of full rank of $\Lambda$, the quotient group $G = \Lambda /
  \Lambda_1$ is a finite abelian group. Its order equals the index~$[\Lambda : \Lambda_1]$, and by
  the Elementary Divisor Theorem, it can be generated by $n$~elements.
  
  \begin{proposition}\label{prop:genFiniteAbelianGroup}
    Let $G$ be a finite abelian group known to be generated by~$n$ elements. Then the probability
    that $n + 1$ elements drawn uniformly at random from $G$ generate $G$ is at least
    \begin{align*}
      \hat{\zeta} := \prod_{i=2}^\infty \zeta(i)^{-1} \ge 0.434\,,
    \end{align*}
    where $\zeta$ denotes the Riemann zeta function.
  \end{proposition}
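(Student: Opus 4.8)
The plan is to reduce the problem over the primes dividing $\abs{G}$ and then to a linear-algebra computation over each residue field. Write $G = \prod_p G_p$ as the direct product of its Sylow $p$-subgroups. Because the orders $\abs{G_p}$ are pairwise coprime, a tuple $(g_1, \dots, g_{n+1})$ generates $G$ if and only if, for every prime~$p$, the projections to $G_p$ generate $G_p$; moreover, if the $g_i$ are sampled uniformly and independently from $G$, their projections to the various $G_p$ are jointly independent and each uniform on $G_p$. Hence the probability factors as $p_{n+1}(G) = \prod_p p_{n+1}(G_p)$, and it suffices to bound $p_{n+1}(G_p)$ for each abelian $p$-group $G_p$ of rank $r_p \le n$ (the rank bound follows since $G$, and hence $G_p$, is generated by $n$ elements).

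For a single abelian $p$-group $H$ of rank $r$, I would invoke the Burnside basis theorem: since the Frattini subgroup of $H$ is $pH$, a subset generates $H$ if and only if its image generates the quotient $H / pH \cong \F_p^{\,r}$. As the quotient map $H \to \F_p^{\,r}$ is a surjective homomorphism, uniform independent samples from $H$ map to uniform independent samples in $\F_p^{\,r}$. Thus $p_{n+1}(H)$ equals the probability that $n + 1$ uniformly random vectors span $\F_p^{\,r}$, equivalently that a random $r \times (n+1)$ matrix over $\F_p$ has full row rank. Counting ordered $r$-tuples of linearly independent rows in $\F_p^{\,n+1}$ against the total number of matrices gives the explicit value
\[
  p_{n+1}(H) = \prod_{i=0}^{r-1} \bigl( 1 - p^{\,i - (n+1)} \bigr) = \prod_{j=n+2-r}^{n+1} \bigl( 1 - p^{-j} \bigr).
\]

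Now I would use $r \le n$, which forces every exponent in the last product to satisfy $j \ge n + 2 - r \ge 2$. Since each factor $1 - p^{-j}$ lies in $(0, 1)$, enlarging the index set can only decrease the product, so
\[
  p_{n+1}(G_p) = \prod_{j=n+2-r_p}^{n+1} \bigl( 1 - p^{-j} \bigr) \ge \prod_{j=2}^{\infty} \bigl( 1 - p^{-j} \bigr)
\]
for every~$p$ (the bound also holds trivially when $r_p = 0$, where the left-hand side is $1$). Multiplying over all primes and interchanging the two products, the Euler product $\zeta(j)^{-1} = \prod_p (1 - p^{-j})$ yields
\[
  p_{n+1}(G) \ge \prod_p \prod_{j=2}^{\infty} \bigl( 1 - p^{-j} \bigr) = \prod_{j=2}^{\infty} \zeta(j)^{-1} = \hat{\zeta}.
\]

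The closing inequality $\hat{\zeta} \ge 0.434$ is then a direct numerical estimate of the rapidly convergent product. The main obstacle I anticipate is the careful justification of the two reductions—the factorization over primes and the passage to the Frattini quotient—so that both the equivalence ``generates $\Leftrightarrow$ spans the quotient'' and the preservation of the uniform distribution under the quotient maps are fully rigorous, together with the interchange and convergence of the infinite product over primes. Once these structural points are settled, the counting of full-rank matrices and the Euler-product manipulation are routine.
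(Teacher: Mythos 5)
Your proof is correct and follows essentially the same route as the paper: decompose $G$ into its Sylow subgroups, use the exact generation probability $\prod_{j=n+2-r}^{n+1}(1-p^{-j})$ for each abelian $p$-group of rank $r \le n$, and conclude via the Euler product that the answer is at least $\prod_{j=2}^{\infty}\zeta(j)^{-1}$. The only difference is that you supply self-contained proofs (independence of the Sylow projections, and the Burnside basis theorem plus a full-rank matrix count over $\F_p$) of the two facts the paper simply cites from Acciaro.
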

  
  For the decimal expansion of $\hat{\zeta}$, see \cite{Sloane}. The probability that a finite group
  is generated by a certain number of random elements has been studied extensively. Formulas for the
  probability for $p$-groups and products of finite groups of coprime orders have been derived by
  V.~Acciaro in \cite[Lemma 4 and Corollary 3]{acciaro} (see also \cite{pomerance-generate}). Our
  result is essentially a corollary of these two results, which we have not found in this form in
  the literature.
    
  \begin{proof}[Proof of Proposition~\ref{prop:genFiniteAbelianGroup}]
    \label{P:genFiniteAbelianGroup}
    For a finite group~$H$, let $\lambda_t(H)$ be the probability that $t$~group elements chosen
    uniformly at random generate~$H$. In \cite{acciaro}, it is shown that if $H$ is a $p$-group with
    minimal number~$d$ of generators, then 
    \begin{align*}
    	\lambda_t(H) = \prod_{i=1}^d (1 - p^{-i}) \cdot
    	\prod_{i=d+1}^t \frac{p^{i-d} - p^{-d}}{p^{i-d} - 1} = \prod_{i=t-d+1}^t (1 - p^{-i})
    \end{align*}
    for $t
    \ge d$ (Lemma~4), and that if $H = H_1 \times H_2$ with $\abs{H_1}, \abs{H_2}$ being coprime,
    then $\lambda_t(H) = \lambda_t(H_1) \lambda_t(H_2)$ (Corollary~3).
    
    Let $p_1, \dots, p_k$ be the distinct prime divisors of $|G|$, and let $G_i$ be the $p_i$-Sylow
    subgroup of $G$. Then $G = G_1 \oplus \dots \oplus G_k$. Now \cite[Corollary~3]{acciaro} yields
    $\lambda_t(G) = \prod_{i=1}^k \lambda_t(G_i)$ since $\abs{G_i}$ is a $p_i$-group, and $p_i \neq
    p_j$ for $i \neq j$. Let $d_i$ be the minimal number of generators for $G_i$; since the minimal
    number for $G$ is~$n$, we must have $d_i \le n$. Thus, by \cite[Lemma~4]{acciaro} \[
    \lambda_{n+1}(G_i) = \prod_{i=n+1-d_i}^{n+1} (1 - p^{-i}) \ge \prod_{i=2}^{n+1} (1 - p^{-i}). \]
    Therefore, the probability that $n$ elements of an arbitrary finite abelian group~$G$ which can
    be generated by $n$ elements generate the group is at least \[ \prod_p \prod_{i=2}^{n+1} (1 -
    p^{-i}) = \prod_{i=2}^{n+1} \prod_p (1 - p^{-i}) = \biggl( \prod_{i=2}^{n+1} \zeta(i)
    \biggr)^{-1} \] using the Euler product representation of the Riemann zeta function.  Now \[
    \prod_{i=2}^{n+1} \zeta(i) \le \prod_{i=2}^{\infty} \zeta(i) = \hat{\zeta}^{-1}. \qedhere \]
  \end{proof}
  
  Observe that our approach only works if we have at least $n + 1$ elements. If we chose just~$n$
  elements randomly, the final product would include $\zeta(1)^{-1} = 0$ and the probability would
  drop down to zero. However, a different approach can result in a non-zero probability for $n$
  elements. This probability will necessarily not be constant anymore, but has to depend on $n$ or
  $\abs{G}$. For example, if $p_1, \dots, p_k$ are distinct primes and $G = \prod_{i=1}^k \F_{p_i}^n
  \cong (\Z/(p_1 \cdots p_k)\Z)^n$, then $G$ can be generated by $n$ elements, but the probability
  that $n$~random elements from $G$ generates~$G$ is exactly $\prod_{i=1}^k \prod_{j=1}^n (1 -
  p_i^{-j})$, which goes to zero for $k \to \infty$ for exactly the above reasons. Hence, any
  non-trivial bound on the probability must take $n$ or $p_1, \dots, p_k$ into account.
  
  This shows that our approach will not work with fewer than $2 n + 1$ elements, if the desired
  bound on the probability should be independent of $n$.
  
  \subsection{The Final Result}
  \label{S:final}
  
  Assume that the first~$n$ sampled vectors from $\Lambda \cap [0, B)^n$ generate a
  sublattice~$\Lambda_1$ of full rank. Then $G = \Lambda/\Lambda_1$ is a finite abelian group which
  can be generated by $n$~elements. Thus if we sample~$n + 1$ elements $\lambda + \Lambda_1$ from
  $G$ in a uniform random manner, we can bound the probability that they generate $G$. In case $G =
  \langle \lambda_{n+1} + \Lambda_1, \dots, \lambda_{2n+1} + \Lambda_1 \rangle$ and $\Lambda_1 =
  \langle \lambda_1, \dots, \lambda_n \rangle$, we have $\Lambda = \langle \lambda_1, \dots,
  \lambda_n, \lambda_{n+1}, \dots, \lambda_{2n+1} \rangle$.
  
  The main problem is that we cannot directly sample uniformly at random from~$G$: if we choose
  $\lambda \in \Lambda \cap [0, B)^n$ uniformly at random, then $\lambda + \Lambda_1$ will in
  general not be uniformly distributed in $G = \Lambda/\Lambda_1$. By enlarging the window~$[0,
  B)^n$ to $[0, B_1)^n$ with $B_1 > B$ large enough, we ensure that the residue classes of the
  samples $\lambda \in \Lambda \cap [0, B_1)^n$ are essentially distributed uniformly at random in
  $G$. More precisely, we can show that the \emph{statistical distance} between the distribution and
  the perfectly uniform distribution is small enough. This is established by the following result:
  
  \begin{lemma}\label{lem:samplingQuotient}
    Let $\Lambda_1$ be an arbitrary full-rank sublattice of $\Lambda$. Assume that $B_1 > 2
    \nu(\Lambda_1)$ and we can sample uniformly at random from $\Lambda \cap [0, B_1)^n$. Denote the
    sample by $\lambda$.
    Then, the total variation distance between the
    uniform distribution over $\Lambda/\Lambda_1$ and the distribution of $\lambda + \Lambda_1$,
    where $\lambda \in \Lambda \cap [0, B_1)^n$ is uniformly distributed, is at most
    \begin{align*}
      1 - \frac{(B_1 - 2 \nu(\Lambda_1))^n}{(B_1 + 2 \nu(\Lambda))^n}\,.
    \end{align*}
  \end{lemma}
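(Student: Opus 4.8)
The plan is to compare the induced distribution of $\lambda + \Lambda_1$ against the uniform distribution on $G := \Lambda / \Lambda_1$ by controlling, coset by coset, how many window points land in each class. Write $N := \abs{\Lambda \cap [0, B_1)^n}$, and for each coset $g \in G$ let $N_g$ denote the number of $\lambda \in \Lambda \cap [0, B_1)^n$ with $\lambda + \Lambda_1 = g$, so that $\sum_g N_g = N$. The distribution of the sample is $P(g) = N_g / N$, while the target is the uniform distribution $Q(g) = 1/\abs{G}$, where $\abs{G} = [\Lambda : \Lambda_1] = \det \Lambda_1 / \det \Lambda$. The total variation distance equals $1 - \sum_{g \in G} \min\{P(g), Q(g)\}$, so it suffices to show that each $P(g)$ is not much smaller than $Q(g)$; concretely, if $P(g) \ge c\,Q(g)$ for all $g$ and some $0 < c \le 1$, then the distance is at most $1 - c$.

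The key estimates come from applying the Voronoi-cell counting of Lemma~\ref{L:voronoilemma1} at two different scales. First I would lower-bound $N_g$: the set of window representatives of a fixed coset is $(v + \Lambda_1) \cap [0, B_1)^n$ for any representative $v$, i.e.\ the intersection of the window with a translate of the lattice $\Lambda_1$. Since translation changes neither the covering radius nor the determinant, the lower-bound half of Lemma~\ref{L:voronoilemma1}---if a Voronoi cell of the translated lattice meets $[\nu(\Lambda_1), B_1 - \nu(\Lambda_1))^n$, then its center lies in $[0, B_1)^n$---applies verbatim and yields $N_g \ge (B_1 - 2 \nu(\Lambda_1))^n / \det \Lambda_1$; here the hypothesis $B_1 > 2 \nu(\Lambda_1)$ guarantees this bound is positive. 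Second, applying the upper-bound half of Lemma~\ref{L:voronoilemma1} to $\Lambda$ itself gives $N \le (B_1 + 2 \nu(\Lambda))^n / \det \Lambda$.

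Combining the two bounds and using $\det \Lambda / \det \Lambda_1 = 1/\abs{G}$, I obtain
\[
  P(g) = \frac{N_g}{N} \ge \frac{\det \Lambda}{\det \Lambda_1} \cdot \frac{(B_1 - 2 \nu(\Lambda_1))^n}{(B_1 + 2 \nu(\Lambda))^n} = Q(g) \cdot \frac{(B_1 - 2 \nu(\Lambda_1))^n}{(B_1 + 2 \nu(\Lambda))^n}
\]
for every coset $g$, so the preliminary reduction applies with $c = (B_1 - 2 \nu(\Lambda_1))^n / (B_1 + 2 \nu(\Lambda))^n$. Note that $c \le 1$, because $\Lambda_1 \subseteq \Lambda$ forces $\nu(\Lambda_1) \ge \nu(\Lambda)$. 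The total variation distance is therefore at most $1 - c$, which is exactly the claimed bound.

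I expect the only real subtlety to be the transfer in the lower bound: one must be careful that Lemma~\ref{L:voronoilemma1} is invoked for an arbitrary translate $v + \Lambda_1$ of $\Lambda_1$ rather than for $\Lambda_1$ positioned at the origin. The argument goes through unchanged because the Voronoi tessellation of $\Lambda_1$ simply shifts under translation, preserving both the cell volume $\det \Lambda_1$ and the covering radius $\nu(\Lambda_1)$; the remaining manipulations are routine.
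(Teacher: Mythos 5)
Your proof is correct, but it takes a genuinely different route from the paper's. The paper constructs an explicit fundamental domain $V$ sandwiched between the open Voronoi cell of $\Lambda_1$ and its closure, calls a window point \emph{good} if it lies in a translate of $V$ wholly contained in the window, shows the uniform distribution on good points pushes forward to the exact uniform distribution on $\Lambda/\Lambda_1$ (using that every translate of $V$ contains exactly $\det\Lambda_1/\det\Lambda$ points of $\Lambda$), and then invokes the data-processing inequality for total variation distance, with a final separate step to pass from the closed window $[0,B_1]^n$ back to $[0,B_1)^n$. You instead bound the coset counts directly: applying the lower-bound half of Lemma~\ref{L:voronoilemma1} to each translate $v+\Lambda_1$ (legitimate, since covering radius, cell volume and the covering property are translation-invariant) gives $N_g \ge (B_1-2\nu(\Lambda_1))^n/\det\Lambda_1$ for every coset, the upper-bound half applied to $\Lambda$ gives $N \le (B_1+2\nu(\Lambda))^n/\det\Lambda$, and the identity $\tfrac12\sum_g\abs{P(g)-Q(g)} = 1-\sum_g\min\{P(g),Q(g)\}$ together with $\nu(\Lambda_1)\ge\nu(\Lambda)$ (so that $c\le 1$) finishes the argument. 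Your version is arguably cleaner: it reuses the already-proved Lemma~\ref{L:voronoilemma1} instead of redoing the tiling argument, it does not need the exactly-$m$-points-per-cell fact (which the paper only sketches ``by asymptotic arguments''), it avoids the data-processing step, and it handles the half-open window automatically. What the paper's construction buys is a concrete ``good set'' on which the sampling is exactly uniform, which makes the source of the defect geometrically explicit; but the two arguments produce the identical bound.
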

  
  \begin{proof}
    \label{P:samplingQuotient}
    First note that $V_{\Lambda_1}(\lambda_1) = \lambda_1 + V_{\Lambda_1}(0)$ and
    $\overline{V_{\Lambda_1}(\lambda_1)} = \lambda_1 + \overline{V_{\Lambda_1}(0)}$. Now, as
    $\bigcup_{\lambda_1 \in \Lambda_1} (\lambda_1 + \overline{V_{\Lambda_1}(0)}) = \R^n$ and two
    translates of $V_{\Lambda_1}(0)$ by different elements of $\Lambda_1$ do not intersect, there
    exists a set $V$ with $V_{\Lambda_1}(0) \subseteq V \subseteq \overline{V_{\Lambda_1}(0)}$
    satisfying \[ \bigcup_{\lambda_1 \in \Lambda_1} (\lambda_1 + V) = \R^n \quad \text{and} \quad
    \forall \lambda_1 \in \Lambda_1 \setminus \{ 0 \} : (\lambda_1 + V) \cap V = \emptyset. \] Note
    that $\volume(V) = \volume(V_{\Lambda_1}(0)) = \det \Lambda_1$.
    
    We first assume that the window has the form $[0,B_1]^n$ instead of the form $[0,B_1)^n$ and
    later argue that the bounds derived also apply to the actual window $[0,B_1)^n$.  We need the
    following three facts:
    \begin{itemize}
    \item There are exactly $m=\det \Lambda_1 / \det \Lambda$ points of $\Lambda$ in each translate of $V$, i.e., 
    \[
    	| (\lambda_1 + V) \cap \Lambda | = m \mbox{ for all $\lambda_1\in\Lambda_1$.}
    \]
    This can be shown by using asymptotic arguments similarly to those used in the proof that each
    translate of the elementary parallelepipeds of $\Lambda_1$ contains exactly $m$ elements of
    $\Lambda$ (see e.g.\ \cite{barvinok-notes}).
    
    \item There are at least 
    \[ 
    	\ell = \frac{(B_1 - 2\nu(\Lambda_1))^n}{\det \Lambda_1} 
     \] 
     translates of $V$ that are entirely contained inside the window $[0, B_1]^n$ since 
     $V \subseteq \overline{B_{\nu(\Lambda_1)}(0)}$.
    
    \item There are at most 
    \[ 
    	u = \frac{(B_1 + 2\nu(\Lambda))^n}{\det \Lambda} 
    \] 
    points of $\Lambda$ inside $[0,B_1]^n$.
    \end{itemize}
    Let $\Omega=\Lambda\cap [0,B_1]^n$.  We call $\lambda\in\Omega$ \emph{good} if there exists
    $\lambda_1\in\Lambda_1$ such that
    \[
    	\lambda \in \lambda_1 + V \subseteq [0,B_1]^n.
    \]
    In words, $\lambda\in\Omega$ is good if it belongs to a translate of $V$ that is entirely 
    inside the window $[0,B_1]^n$.  Let $\Omega_{\rm{good}}$ denote the
    set of good points.  Using the first two facts above, we deduce that $|\Omega_{\rm{good}}|\ge m \ell$.
    
    Let $\calP$ denote the uniform distribution on $\Omega$ and $\tilde{\calP}$ the uniform
    distribution on the set of good points.  We view $\tilde{\calP}$ as a probability distribution
    on $\Omega$ by assigning the probability $0$ to any point that is not good.  Then, the total
    variation distance between $\calP$ and $\tilde{\calP}$ is bounded from above by
    \begin{align*}
    	\frac{1}{2} \sum_{\lambda\in\Omega} | \calP(\lambda) - \tilde{\calP}(\lambda) | 
	&=
	\frac{1}{2} |\Omega_{{\rm good}}| \left( \frac{1}{|\Omega_{\rm{good}}|} - \frac{1}{|\Omega|} \right) + 
	\frac{1}{2} \left( |\Omega| - |\Omega_{{\rm good}}| \right) \frac{1}{|\Omega|} \\
	&= 1 - \frac{|\Omega_{\rm{good}}|}{|\Omega|} 
	\le 1 - \frac{m\ell}{u} 
	\le 1 - \frac{\big(B_1-2\nu(\Lambda_1)\big)^n}{\big(B_1+2\nu(\Lambda)\big)^n}.
    \end{align*}
    
    Let $\kappa : \Lambda \rightarrow \Lambda/\Lambda_1$ denote the canonical projection map.  Let
    $\cal{Q}$ and $\tilde{\calQ}$ be the probability distribution on the cosets $\Lambda/\Lambda_1$
    induced by the following two-step process: (1) sample $\lambda$ according to $\calP$ and
    $\tilde{\calP}$, respectively, and (2) apply $\kappa$ to the obtained sample $\lambda$.  Observe
    that $\tilde{\calQ}$ is the uniform distribution on $\Lambda/\Lambda_1$.  Unfortunately, we
    cannot sample according to $\tilde{\calQ}$ but only according to $\calQ$.  However, the total
    variation distance between $\calQ$ and $\tilde{\calQ}$ must be less or equal to the one between
    $\calP$ and $\tilde{\calP}$ since the total variation distance satisfies a so-called data
    processing inequality.
        
    Note that so far, we have considered $[0, B_1]^n$ instead of $[0, B_1)^n$. As $\Lambda$ is
    discrete, there exists some $2 \nu(\Lambda_1) < B_1' < B_1$ with $[0, B_1']^n \cap \Lambda = [0,
    B_1)^n$. Applying the result above to $[0, B_1']^n$ and then using that 
    \begin{align*}
    	x \mapsto 1 - \frac{(x
    	- 2 \nu(\Lambda_1))^n}{(x + 2 \nu(\Lambda))^n}
    \end{align*} 
    is increasing yields the stated claim for $[0,B_1)^n$.
  \end{proof}
  
  Combining the lemma and Proposition~\ref{prop:genFiniteAbelianGroup} and using the additivity of
  the total variation distance under composition provided that the components are independent, we
  obtain the following result:
  
  \begin{corollary}\label{corr:generateWholeLattice}
    Assume that $B \ge 8 n^{n/2} \cdot \nu(\Lambda)$ and $B_1 \ge 8 n^2 (n + 1) B$. Let $Y$ be as in
    Corollary~\ref{cor:probGenFullRankSubLattice} and $(y_1,\ldots,y_n)\in Y$. Let
    \begin{eqnarray*}
      X_1 & := & \big(\Lambda \cap [0, B_1)^n \big)^{n+1} \\
      Z   & =  & \{(z_1,\ldots,z_{n+1})\in X_1^{n+1} \mid \mspan_\Z
      \{y_1,\ldots,y_n,z_1,\ldots,z_{n+1} \} = \Lambda \}.
    \end{eqnarray*}
    Then $\abs{Z} \ge \bigl(\hat{\zeta}-\tfrac{1}{4} \bigr) \abs{X_1} \ge 0.184 \abs{X_1}$.
  \end{corollary}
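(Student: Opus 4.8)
The plan is to read $\abs{Z}/\abs{X_1}$ as a probability and reduce it to Proposition~\ref{prop:genFiniteAbelianGroup} via Lemma~\ref{lem:samplingQuotient}. Set $\Lambda_1 := \mspan_\Z\{y_1,\dots,y_n\}$. Since $(y_1,\dots,y_n)\in Y$, these vectors are linearly independent, so $\Lambda_1$ is a full-rank sublattice, $y_1,\dots,y_n$ form a $\Z$-basis of it, and $G := \Lambda/\Lambda_1$ is a finite abelian group generated by $n$ elements. Writing $\kappa\colon\Lambda\to G$ for the canonical projection, the condition defining $Z$ is equivalent to $\kappa(z_1),\dots,\kappa(z_{n+1})$ generating $G$, because $\mspan_\Z\{y_1,\dots,y_n,z_1,\dots,z_{n+1}\}=\Lambda$ iff the classes $\kappa(z_j)$ generate $\Lambda/\Lambda_1$. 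Hence, for $z_i$ drawn uniformly and independently from $\Lambda\cap[0,B_1)^n$, the ratio $\abs{Z}/\abs{X_1}$ is exactly the probability that the $n+1$ residue classes $\kappa(z_i)$ generate $G$, and it suffices to bound this from below by $\hat\zeta-\tfrac14$.

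I would assemble three ingredients. First, by Proposition~\ref{prop:genFiniteAbelianGroup}, if the $n+1$ classes were drawn \emph{uniformly} from $G$, they would generate $G$ with probability at least $\hat\zeta$. Second, Lemma~\ref{lem:samplingQuotient} controls the deviation from uniformity of one sample: the total variation distance $\delta$ between $\kappa(z)$ (for $z$ uniform on $\Lambda\cap[0,B_1)^n$) and the uniform law on $G$ is at most $1-\frac{(B_1-2\nu(\Lambda_1))^n}{(B_1+2\nu(\Lambda))^n}$, provided $B_1>2\nu(\Lambda_1)$. Third, and this is the crux, I need an upper bound on $\nu(\Lambda_1)$ in terms of $B$. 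Since $y_1,\dots,y_n$ is a basis of $\Lambda_1$ with $\norm{y_i}_2<\sqrt n\,B$, applying the standard Gram--Schmidt covering-radius bound to the orthogonalization $y_1^*,\dots,y_n^*$ (with $\norm{y_i^*}\le\norm{y_i}$) gives $\nu(\Lambda_1)\le\tfrac12\bigl(\sum_i\norm{y_i^*}^2\bigr)^{1/2}\le\tfrac12 nB$. Combined with $\nu(\Lambda)\le\nu(\Lambda_1)$ (as $\Lambda_1\subseteq\Lambda$) and $B_1\ge 8n^2(n+1)B$, this verifies $B_1>2\nu(\Lambda_1)$ and lets me control $\delta$.

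Writing $r:=2\nu(\Lambda_1)\le nB$ and replacing $\nu(\Lambda)$ in the denominator by the larger $\nu(\Lambda_1)$ (which only increases the bound on $\delta$), the estimate simplifies via $(1-t)^n\ge1-nt$ to
\[
  \delta\le1-\Bigl(\frac{B_1-r}{B_1+r}\Bigr)^n\le\frac{2nr}{B_1}\le\frac{2n^2B}{B_1}.
\]
I would then tensorize over the $n+1$ independent samples: by the subadditivity of total variation distance under products (the additivity under independent composition), the distance between the joint law of $(\kappa(z_1),\dots,\kappa(z_{n+1}))$ and the uniform law on $G^{n+1}$ is at most $(n+1)\delta$, and with $B_1\ge8n^2(n+1)B$ this gives $(n+1)\delta\le\frac{2(n+1)n^2B}{8n^2(n+1)B}=\tfrac14$. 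Since a total variation bound controls the difference of the probabilities assigned to any event, the probability that the $\kappa(z_i)$ generate $G$ is at least $\hat\zeta-(n+1)\delta\ge\hat\zeta-\tfrac14\ge0.184$, whence $\abs{Z}\ge(\hat\zeta-\tfrac14)\abs{X_1}$. The main obstacle is the covering-radius estimate of the previous paragraph: the naive diameter bound $\nu(\Lambda_1)\le\tfrac12\sum_i\norm{y_i}\le\tfrac12 n^{3/2}B$ is too weak, as it would only yield $(n+1)\delta\le\tfrac{\sqrt n}{4}$, and it is precisely the sharper $\ell_2$/Gram--Schmidt bound $\nu(\Lambda_1)\le\tfrac12 nB$ that makes the chosen $B_1$ sufficient.
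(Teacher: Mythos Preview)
Your proof is correct and follows essentially the same route as the paper: bound $\nu(\Lambda_1)\le \tfrac{nB}{2}$, plug into Lemma~\ref{lem:samplingQuotient} to get a per-sample total variation bound of $\tfrac{2n^2 B}{B_1}\le\tfrac{1}{4(n+1)}$, tensorize over $n+1$ independent samples to obtain $\tfrac14$, and subtract this from the $\hat\zeta$ of Proposition~\ref{prop:genFiniteAbelianGroup}. The only cosmetic difference is your justification of the covering-radius estimate via the Gram--Schmidt bound $\nu(\Lambda_1)\le\tfrac12\bigl(\sum_i\norm{y_i^*}_2^2\bigr)^{1/2}$, whereas the paper uses the successive-minima inequality $\nu(\Lambda_1)\le\tfrac{\sqrt n}{2}\,\lambda_n(\Lambda_1)$ together with $\lambda_n(\Lambda_1)\le\max_i\norm{y_i}_2$; both yield the same numerical bound $\tfrac{nB}{2}$.
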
  
  
  \begin{proof}
    \label{P:generateWholeLattice}
    Let $\Lambda_1$ be the full-rank sublattice generated by $y_1,\ldots,y_n$.  We have the
    following simple bound on the covering radius
    \begin{align*}
      \nu(\Lambda_1) \le \frac{\sqrt{n}}{2} \lambda_n(\Lambda_1) \le \frac{\sqrt{n}}{2}
      \max_{i=1,\ldots,n} \| y_i \|_2 \le \frac{\sqrt{n}}{2} \sqrt{n} B = \frac{n B}{2}
    \end{align*}
    since the $y_i$ are linearly independent and every vector in $[0, B)^n$ is shorter than
    $\sqrt{n} B$. Moreover, $\nu(\Lambda_1) \ge \nu(\Lambda)$.
    
    Let $z_i$ be uniformly distributed in $\Lambda \cap [0, B_1)^n$.  Then,
    Lemma~\ref{lem:samplingQuotient} implies that $z_i + \Lambda_1$ (for $i=n+1,\ldots,2n+1$) are
    distributed almost uniformly at random from $\Lambda/\Lambda_1$.  The total variation distance
    from the uniform distribution is bounded from above as follows:
    \begin{align*}
      & 1 - \frac{(B_1 - 2\nu(\Lambda_1))^n}{(B_1 + 2\nu(\Lambda))^n} \le 1 - \frac{(B_1 -
        2\nu(\Lambda_1))^n}{(B_1 + 2\nu(\Lambda_1))^n} = 1 - \left( 1 - \frac{4\nu(\Lambda_1)}{B_1 +
        2\nu(\Lambda_1)} \right)^n \\
      {}\le{} & 1 - \left( 1 - n \, \frac{4\nu(\Lambda_1)}{B_1 + 2\nu(\Lambda_1)} \right) \le
      \frac{4n \nu(\Lambda_1)}{B_1} \le \frac{2 n^2 B}{B_1} \le \frac{1}{4(n+1)}\,.
    \end{align*}
    Consider now the uniform probability distribution on the $(n+1)$-fold direct product of
    $\Lambda/\Lambda_1$ and the probability distribution that arises from sampling almost uniformly
    at random on each of the components as above.  Then the total variation between these two
    distributions is bounded from above by $(n+1) \cdot \frac{1}{4(n+1)} = \tfrac{1}{4}$.  This is
    because the total variation distance is subadditive under composition provided that the
    components are independent (see e.g.\ \cite[Subsection 1.3 ``Statistical distance'' in
    Chapter~8]{micciancio-goldwasser} for more information on the total variation distance).
    
    Clearly, the abelian group $\Lambda/\Lambda_1$ can be generated with only $n$ generators.
    Hence, Proposition~\ref{prop:genFiniteAbelianGroup} implies that $n+1$ samples (provided that
    they are distributed uniformly at random over the group) form a generating set with probability
    greater or equal to $\hat{\zeta}$.  Due to the deviation from the uniform distribution on the
    $(n+1)$-fold direct product of $\Lambda/\Lambda_1$ this probability may decrease. However it is
    at least $\hat{\zeta}-1/4$ since the total variation distance is at most $1/4$.  The claim
    follows now by translating the lower bound on the probability to a lower bound on the fraction
    of elements with the desired property.
  \end{proof}
  
  Combining this corollary with Corollary~\ref{cor:probGenFullRankSubLattice}, we obtain a proof of
  Theorem~\ref{thm:main}.  This theorem is similar to Satz~2.4.23 in \cite{arthurDiss}.  We
  emphasize that our bound on the success probability is constant, whereas the bound presented in
  Satz~2.4.23 decreases exponentially fast with the dimension $n$.  The first part of proof of
  Satz~2.4.23 (concerning the generation of a full-rank sublattice) is unfortunately not correct,
  but can be corrected as we have shown in our proof of
  Corollary~\ref{cor:probGenFullRankSubLattice}.  The idea behind the second part is completely
  different from our proof and cannot be used to prove a constant success probability.  Perhaps it
  could be used to prove that only $2 n$ random elements (as opposed to $2 n + 1$ elements) are
  needed to guarantee a non-zero success probability.
  
  Note that for a fixed dimension~$n$, one obtains bounds larger than 0.092.
  For $n = 2$, $3$, $4$ and $5$, $\alpha_n$ is larger than $0.238$,
  $0.185$, $0.176$, $0.172$ and $0.170$, respectively.
  
  \section{Conjecture}
  \label{S:Conjecture}
  
  Let $b_1, \dots, b_n$ be any basis of the lattice~$\Lambda$. Consider the natural isomorphism
  $\Phi : \R^n \to \R^n$ mapping the $i$-th standard unit vector~$e_i$ to $b_i$. Then $\Phi(\Z^n) =
  \Lambda$. Let \[ X := \Phi^{-1}([0, B)^n) = \biggl\{ (a_1, \dots, a_n) \in \R^n \biggm|
  \sum_{i=1}^n a_i b_i \in [0, B)^n \biggr\}; \] this is a parallelepiped in $\R^n$ of volume
  $\frac{B^n}{\det \Lambda}$ having 0 as a vertex. If we assume that the basis $b_1, \dots, b_n$ is
  reduced, then this parallelepiped is not too skewed.
  
  Now let $v_1, \dots, v_m \in \Lambda$ be vectors, $m \ge n$, and consider $\hat{v}_i :=
  \Phi^{-1}(v_i) \in \Z^n$ for $i = 1, \dots, m$. We have that $\langle v_1, \dots, v_m \rangle =
  \Lambda$ if and only if $\langle \hat{v}_1, \dots, \hat{v}_m \rangle = \Z^n$, and this is the case
  if and only if the matrix $(\hat{v}_1, \dots, \hat{v}_m) \in \Z^{n \times m}$ is
  \emph{unimodular}.
  
  Therefore, the probability that $m \ge n$ vectors selected uniformly at random in $\Lambda \cap
  [0, B)^n$ generate $\Lambda$ equals the probability that an $n \times m$ integer matrix whose
  columns are chosen uniformly at random in $X$ is unimodular.
  
  As indicated in the introduction, this problem was studied for special forms of $X$
  asymptotically. For $X = [-B, B]^n$, $B \to \infty$, it was studied in G.~Maze, J.~Rosenthal and
  U.~Wagner showed in \cite{naturaldensityunimodular}, and for $X = v + [-B, B]^n$, $B \to \infty$
  while $v$ is bounded polynomially in terms of $B$, in S.~Elizalde and K.~Woods
  \cite{elizalde-woods}. In both cases, it was shown that the limit of the probability for $B \to
  \infty$ is $\prod_{j=m-n+1}^m \zeta(j)^{-1}$ -- which for $m = n + 1$, not very surprisingly,
  equals the probability given in Section~\ref{S:groupgen}. This can be bounded from below by
  $\hat{\zeta} > 0.434$ as soon as $m > n$. This implies that for a certain $\hat{B} > 0$, we have
  that the probability is at least $0.434$ for all $B > \hat{B}$.
  
  While it seems probable that the proof of \cite{elizalde-woods} can yield effective non-trivial
  bounds for any such $\hat{B}$. However, it is unclear whether this would help for the general
  case, as the proof only considers the special case~$X = v + [-B, B]^n$, while we have to consider
  essentially arbitrary parallelepipeds with $0$ as a vertex.
  
  We have run computer experiments to study the probability for arbitrary parallelepipeds. We
  restricted to the case~$m = n + 1$. For the experiments, we generated a random parallelepiped by
  choosing $n$~vectors from $[-C, C]^n$ and considering the parallelepiped spanned by them. We
  generated 1000 such parallelepipeds, and for every parallelepiped we generated 10\,000~integer
  matrices with columns taken uniformly at random from the parallelepiped. Every matrix was tested
  whether it is unimodular. We used three different bounds for $C$, namely $C = 10^4$, $C = 10^9$
  and $C = 10^{18}$. For every combination of $n \times m = n \times (n + 1)$ and $C$, we computed
  both the average probability that an $n \times m$ integer matrix taken from a parallelepiped is
  unimodular, and the minimal probability (over all parallelepipeds for given $n \times m$ and
  $C$). The results are shown in Tables~\ref{T:average} (average probabilities) and \ref{T:minimal}
  (minimal probabilities) on page~\pageref{T:average}. They also include the ``ideal'' probabilities
  $\prod_{j=2}^{n+1} \zeta(j)^{-1}$ predicted for the special parallelepiped with $B \to \infty$ in
  \cite{naturaldensityunimodular}.
  
  \begin{table}[p]
    \begin{center}
    \begin{tabular}{|r||r|r|r||r|}\hline
      $n$ & $C = 10^4$ & $C = 10^9$ & $C = 10^{18}$ & ideal probability \\\hline\hline
       1 & 60.7273\% & 60.8094\% & 60.8103\% & 60.7927\% \\\hline
       2 & 50.5849\% & 50.5899\% & 50.5649\% & 50.5739\% \\\hline
       3 & 46.7040\% & 46.7257\% & 46.7367\% & 46.7272\% \\\hline
       4 & 45.0382\% & 45.0252\% & 45.0080\% & 45.0631\% \\\hline
       5 & 44.2531\% & 44.2315\% & 44.2052\% & 44.2949\% \\\hline
       6 & 43.8661\% & 43.8894\% & 43.8740\% & 43.9281\% \\\hline
       7 & 43.6945\% & 43.6773\% & 43.7059\% & 43.7497\% \\\hline
       8 & 43.6003\% & 43.6162\% & 43.6049\% & 43.6620\% \\\hline
       9 & 43.5529\% & 43.5662\% & 43.5447\% & 43.6187\% \\\hline
      10 & 43.5369\% & 43.5343\% & 43.5332\% & 43.5971\% \\\hline
      11 & 43.5124\% & 43.5463\% & 43.5556\% & 43.5864\% \\\hline
      12 & 43.5314\% & 43.5488\% & 43.5218\% & 43.5810\% \\\hline
      13 & 43.5329\% & 43.5314\% & 43.5224\% & 43.5784\% \\\hline
      14 & 43.5217\% & 43.5322\% & 43.5679\% & 43.5770\% \\\hline
      15 & 43.5113\% & 43.5273\% & 43.4947\% & 43.5764\% \\\hline
    \end{tabular}
    \end{center}
    \caption{Average empirical probability that a random $n \times (n + 1)$ integer matrix from a
    random parallelepiped inside $[-C, C]^n$ is unimodular.}
    \label{T:average}
  \end{table}
  
  \begin{table}[p]
    \begin{center}
    \begin{tabular}{|r||r|r|r||r|}\hline
      $n$ & $C = 10^4$ & $C = 10^9$ & $C = 10^{18}$ & ideal probability \\\hline\hline
       1 & 58.98\% & 59.17\% & 59.31\% & 60.7927\% \\\hline
       2 & 49.03\% & 48.91\% & 49.17\% & 50.5739\% \\\hline
       3 & 45.16\% & 44.96\% & 45.34\% & 46.7272\% \\\hline
       4 & 43.09\% & 43.31\% & 43.60\% & 45.0631\% \\\hline
       5 & 42.39\% & 42.61\% & 42.61\% & 44.2949\% \\\hline
       6 & 42.27\% & 42.06\% & 42.06\% & 43.9281\% \\\hline
       7 & 42.24\% & 42.37\% & 41.72\% & 43.7497\% \\\hline
       8 & 41.99\% & 42.17\% & 41.83\% & 43.6620\% \\\hline
       9 & 42.18\% & 42.14\% & 41.78\% & 43.6187\% \\\hline
      10 & 42.14\% & 42.02\% & 42.14\% & 43.5971\% \\\hline
      11 & 41.94\% & 41.97\% & 42.09\% & 43.5864\% \\\hline
      12 & 41.86\% & 41.81\% & 42.09\% & 43.5810\% \\\hline
      13 & 41.98\% & 42.12\% & 42.05\% & 43.5784\% \\\hline
      14 & 41.65\% & 42.10\% & 42.06\% & 43.5770\% \\\hline
      15 & 41.99\% & 42.00\% & 42.13\% & 43.5764\% \\\hline
    \end{tabular}
    \end{center}
    \caption{Minimal empirical probability that a random $n \times (n + 1)$ integer matrix from a
    random parallelepiped inside $[-C, C]^n$ is unimodular.}
    \label{T:minimal}
  \end{table}
  
  As one can clearly see, the average values are very close to the ideal ones. But also the minimal
  probabilities observed in the experiments were always close to the ideal values. In fact, the
  difference between minimal and maximal probabilities never exceeded 3.66\%. If one compares these
  probabilities to the ones given at the end of Section~\ref{S:final}, one sees that the
  probabilities obtained there are far too low.
  
  Our conjecture is based on the evidence sketched above.  
  The conditions on $f$ ensure that given a family of lattices where we have an upper bound on $\det
  \Lambda$ and a lower bound on $\lambda_1(\Lambda)$, we can find a lower bound on $B$ such that the
  result holds for all lattices of this family. This is for example the case for unit lattices of
  number fields. There, one has a lower bound on $\lambda_1(\Lambda)$ depending only on the degree
  of the number field \cite{remak-regulator}, and an upper bound on $\det \Lambda$ in terms of the
  degree and discriminant of the number field \cite{sands-generalization}.
  
  The only case in which we know how to prove the conjecture is $n = 1$. In that case, we have
  $\Lambda = v \Z$ for some real number $v > 0$. Given two elements $a v, b v \in \Lambda \cap [0,
  B)$, we have that $\langle a v, b v \rangle = v \Z$ if and only if $a$ and $b$ are
  coprime. Therefore, we are interested in the probability that two random integers in $\bigl[0,
  \frac{B}{\det \Lambda}\bigr)$ are coprime. For $\frac{B}{\det \Lambda} \to \infty$, it is
  well-known that this probability goes to $\zeta(2)^{-1} = \frac{6}{\pi^2} \approx 0.607927$. One
  can easily make this more precise, for example by using the computations from \cite{lehmer-aects}
  and additional computer computations for $n \le 1000$:
  
  \begin{proposition}
    \label{prop:none}
    Let $n \ge 1$ be a natural number and \[ p_n = \frac{|\{ (x, y) \in \N^2 \mid 0 \le x, y \le n,
    \; \gcd(x, y) = 1 \}|}{(n + 1)^2}. \] Then \[ p_n \ge \frac{13}{22} > 0.5909 \] with equality in
    the first inequality if and only if $n = 10$.
  \end{proposition}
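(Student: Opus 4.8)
The plan is to reduce the inequality to a closed form in the Euler totient summatory function $\Phi(n) := \sum_{k=1}^{n} \varphi(k)$ and then to split the range of $n$ into an analytic tail and a finite computational check. First I would compute the numerator. Splitting the pairs with both coordinates positive into the cases $x<y$, $x>y$ and $x=y$, and using that exactly $\varphi(y)$ elements of $\{1,\dots,y\}$ are coprime to $y$, gives $2\Phi(n)-1$ coprime pairs in $\{1,\dots,n\}^2$; the pairs meeting a coordinate axis contribute only $(0,1)$ and $(1,0)$ because $\gcd(0,k)=k$. Hence the numerator equals $2\Phi(n)+1 = 2 + \sum_{1\le x,y\le n}[\gcd(x,y)=1]$, and the standard Möbius evaluation $\sum_{1\le x,y\le n}[\gcd(x,y)=1] = \sum_{d=1}^{n}\mu(d)\floor{n/d}^2$ gives the workable formula \[ p_n=\frac{2+\sum_{d=1}^{n}\mu(d)\floor{n/d}^2}{(n+1)^2}. \]

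Second, for large $n$ I would make the limit $p_n\to 6/\pi^2$ effective. Comparing each $\floor{n/d}^2$ with $n^2/d^2$ costs at most $2n/d$, so $\sum_{d\le n}\abs{\floor{n/d}^2-n^2/d^2}=O(n\log n)$, while replacing the truncated sum $\sum_{d\le n}\mu(d)/d^2$ by $\zeta(2)^{-1}=6/\pi^2$ costs $n^2\sum_{d>n}d^{-2}=O(n)$; together these yield an explicit bound of the shape $\abs{p_n-\tfrac{6}{\pi^2}}\le c\,\tfrac{\log n}{n}$. Since $\tfrac{6}{\pi^2}\approx 0.6079$ exceeds $\tfrac{13}{22}\approx 0.5909$ by a fixed margin of about $0.017$, solving $c\,\tfrac{\log n}{n}<\text{margin}$ produces an explicit threshold $N_0$ with $p_n>\tfrac{13}{22}$ for all $n\ge N_0$; the paper's cutoff $N_0=1000$ should be comfortably above this crossover. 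Alternatively one can read off a ready-made effective estimate for $\Phi(n)$ from \cite{lehmer-aects} and feed it into the same computation.

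Third, for the finite range $1\le n<N_0$ I would sieve $\Phi(n)$ (equivalently, the coprime-pair counts) and verify the inequality numerically, recording that it is strict except at the unique minimizer $n=10$. This both establishes the bound on the remaining range and pins down the minimizer, giving the ``if and only if'' clause; since $\Phi$ is obtained from a linear sieve, this step is routine.

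The step I expect to be the main obstacle is the second one: one must produce a genuinely \emph{explicit} constant $c$ rather than an $O$-statement, and must check that the resulting $N_0$ does not exceed the range covered by the direct computation, so that the two ranges meet with no gap. Because the error term is of order $n\log n=o(n^2)$ while the margin $\tfrac{6}{\pi^2}-\tfrac{13}{22}$ is a positive constant, the crossover is provably small; the only real work is bookkeeping the constants carefully enough to certify $N_0\le 1000$.
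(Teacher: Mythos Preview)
Your proposal is correct and follows essentially the same route as the paper: express the numerator as $2\Phi(n)+1$ with $\Phi(n)=\sum_{k\le n}\varphi(k)$, use an explicit $O(n\log n)$ error estimate for $\Phi(n)$ (the paper simply quotes Lehmer's bound $|\Phi(n)-\tfrac{3}{\pi^2}n^2|\le \tfrac{3}{2}n+n\log n$, which is exactly the alternative you mention) to handle $n>1000$, and verify $1\le n\le 1000$ by direct computation, identifying $n=10$ as the unique minimizer. The only cosmetic difference is that you phrase the analytic step as bounding $|p_n-6/\pi^2|$ while the paper writes the resulting lower bound for $p_n$ out explicitly.
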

  
  \begin{proof}[Proof of Proposition~\ref{prop:none}.]
    \label{P:none}
    For $n \ge 1$, let \[ A(n) := \abs{\{ (x, y) \in \N^2 \mid 0 \le x, y \le n, \; \gcd(x, y) = 1
    \}}. \] Clearly, $p_n = \frac{A(n)}{(n + 1)^2}$ and $A(n) = 2 \sum_{k=1}^n \phi(k) + 1$,
    where \[ \phi(k) = \abs{\{ x \in \N \mid 0 \le x < k, \; \gcd(x, k) = 1 \}} \] is Euler's
    totient function. Now in \cite[Theorem~IV and proof]{lehmer-aects}, it is proven that \[
    \sum_{k=1}^n \phi(k) = \frac{n^2}{2} \cdot \frac{1}{\zeta(2)} + \Delta(n), \quad \text{where }
    \abs{\Delta(n)} \le n \sum_{k=1}^n \frac{1}{k} + \frac{n^2}{2} \cdot \frac{1}{n} \] and $\zeta$
    is the Riemann $\zeta$ function. Now $\sum_{k=1}^n \frac{1}{k} \le 1 + \int_1^n \frac{1}{x} \;
    dx = 1 + \log n$, whence \[ \abs{\Delta(n)} \le n (1 + \log n) + \tfrac{1}{2} n = \tfrac{3}{2} n
    + n \log n. \] This together with $\zeta(2) = \frac{\pi^2}{6}$ shows that
    \begin{align*}
      p_n ={} & \frac{1 + 2 \sum_{k=1}^n \phi(k)}{(n + 1)^2} \ge \frac{1 + 2 \bigl( \frac{3}{\pi^2}
        n^2 - \tfrac{3}{2} n - n \log n \bigr)}{(n + 1)^2} \\ {}={} & \frac{6}{\pi^2} \cdot
      \frac{n^2}{(n + 1)^2} - \frac{n \log n}{(n + 1)^2} - \frac{3 n}{2 (n + 1)^2} + \frac{1}{(n +
        1)^2}.
    \end{align*}
    Using a computer program, one quickly verifies that $p_n \ge \frac{13}{22}$ for all $n \in \Z
    \cap [1, 1000]$, with equality if and only if $n = 10$. For $n > n_0 := 1000$, the above
    inequality yields \[ p_n > \frac{6}{\pi^2} \cdot \frac{n_0^2}{(n_0 + 1)^2} - \frac{n_0 \log
    n_0}{(n_0 + 1)^2} - \frac{3 n_0}{2 (n_0 + 1)^2} > \tfrac{13}{22}. \qedhere \]
  \end{proof}
   
  Therefore, the conjecture is true for $n = 1$ with $c_1 = \frac{13}{22}$ and $f_1(x, y) = x$.
  
  Finally, note that in case $n = m$, the result in \cite{naturaldensityunimodular} shows that one
  expects that the only lower bound one can give is 0. We have run a few experiments here as well,
  and already for $C = 10^4$, not a single unimodular matrix was found during the experiments.
  
  \section{Relevance of Lattice Generation to Quantum Algorithms and Quantum Cryptanalysis}
  The \emph{Discrete Logarithm Problem} (DLP) is a mathematical primitive on which many public-key
  cryptosystems are based. Examples of groups, in which the DLP is considered to be computationally
  hard, include the multiplicative group of $\F_q$ \cite{HOAC}, the group of $\F_q$-rational points
  of an elliptic curve \cite{hehcc}, the divisor class and ideal class groups of an algebraic curve, 
  or the infrastructure of an algebraic number field
  \cite{buchmann-numbertheoreticalgsandcrypto,scheidler-buchmann-williams-qrkeyexchange}. 
  For the cryptographically relevant instances, the best know classical algorithms have a subexponential running time. 
  In contrast, there are efficient quantum algorithms that solve these
  DLPs in polynomial time
  \cite{shor,cheung-mosca,hallgrenPell,pradeep-pawel,hallgrenUnitgroup,schmidt-vollmer,arthurDiss}.
  
  The statement on the running time of the quantum algorithm for solving the DLP in the
  infrastructure of a number field needs to be made more precise.  It scales polynomially in the
  logarithm of the discriminant~$\Delta_{K/\Q}$ of the number field~$K$, but exponentially in a
  polynomial expression~$q([K : \Q])$ of its degree~$[K : \Q]$.  While the exponential scaling seems
  to be unavoidable for fundamental reasons -- one has to compute shortest lattice basis vectors in
  dimension $[K : \Q]$ to be able to perform basic arithmetic operations in the infrastructure -- it
  is important to reduce the magnitude of the polynomial~$q$.  We now explain why our theorem and
  conjecture on lattice generation can be used to achieve such reduction.
  
  The quantum algorithm solves the DLP problem by reducing it to the problem of finding a basis of a
  certain full-rank lattice $L\subset\R^n$ where $n=d+1$.  We explain this reduction in more detail
  at the end of this section.  To find a basis of $L$, the quantum algorithm has a mechanism which,
  with a certain probability~$p_1 > 0$, outputs an essentially uniformly distributed vector~$\lambda
  \in \Lambda \cap [0, B)^{n+1}$, where $\Lambda=L^\ast$ is the dual lattice of $L$ and $B > 0$ is
  sufficiently large.  With probability $1-p_1$ it outputs a vector that is not an element of
  $\Lambda$. Unfortunately, this unfavorable case cannot be recognized efficiently.  If one has
  $\lambda_1, \dots, \lambda_m$ with $\Lambda = \langle \lambda_1, \dots, \lambda_m \rangle_\Z$, one
  can compute a basis of $\Lambda$ from these vectors and then use linear algebra (matrix inversion)
  to retrieve a basis of $L=\Lambda^\ast$ itself.
  
  To compute the success probability, one has to consider the probability that the
  $m$ sampled vectors are actually in $\Lambda$, and the probability that the $m$ random vectors
  from $\Lambda \cap [0, B)^{n+1}$ generate $\Lambda$. If the latter probability is $p_2$,
  then the overall success probability is $\approx p_1^m p_2$, and one expects that one has to run
  the algorithm $\approx (p_1^m p_2)^{-1}$ times before it outputs a basis of $\Lambda$ and
  thus of $L$ itself.
  The main problem is that for $n > 1$, the lower bound one can prove for $p_1$ is quite small. 
  In fact, it seems unavoidable that $p_1$ is bounded away from $1$ by a nonzero constant.  Therefore,
  it becomes evident why it is so important to minimize $m$ without decreasing $p_2$ too much.
  
  Theorem~\ref{thm:main} shows that the quantum algorithm can recover a basis of $L$ with constant
  probability $p_2$ conditioned on the event that it has obtained $m=2n + 1$ samples of $\Lambda$,
  which occurs with probability $p_1^m=p_1^{2n+1}$.  For this, we use two different window sizes:
  the first $n$ vectors are sampled from a smaller window $[0, B)^{n+1}$, and the latter $n +
  1$~vectors from a larger window $[0, B_1)^{n+1}$ with $B_1 > B$. To the best of our knowledge,
  this is the first explicit result that leads to a rigorous bound on the running time of the
  quantum algorithm.
    
  Conjecture implies that that the quantum algorithm can recover a basis of $L$ with constant probability $p_2$
  conditioned on the event that it has obtained only $m_c=n+1$ samples of $\Lambda$.  This event occurs with
  probability $p_1^{m_c}=p_1^{n+1}$, which is greater by the exponential factor $1/p_1^n$.  Moreover, the 
  quantum algorithm becomes simpler because it suffices to sample vectors
  from one window. 
  
  For the sake of completeness, we now describe the reduction in more detail.  The infrastructure of
  a number field is isomorphic to a torus~$T = \R^d/M$, where $M$ is a full-rank lattice in $\R^d$
  and the coefficients of all non-trivial vectors of $M$ are transcendental numbers
  \cite{ff-tioagfoaur}.  This forces one to work with approximations, which is ultimately
  responsible for the poor performance when the dimension $d$ increases.
    
  Assume that we are given two elements~$x, y \in T$ and we want to find the ``discrete
  logarithm''~$\ell \in \Z$ with $\ell x = y$, assuming that such number $\ell$ exists. For this, it suffices to
  know $v, w \in \R^d$ with $v + M = x$ and $w + M = y$ together with a basis of
  $M$: then one can use linear algebra to recover $\ell$ (and decide whether it exists). In the
  infrastructure of a number field, one has a representation of $T$ which allows us to compute the
  projection $\R^d \to T$ easily, but recovering a preimage of a random~$x \in T$ is hard.
  
  Finding a preimage can be reformulated as a lattice problem: consider the map \[ \phi : \Z \times
  \R^d \to T, \quad (z, v) \mapsto z x + v; \] this is a group homomorphism, and the
  kernel~$L := \ker \phi$ is a lattice in $\R^{d+1}$ of full rank. The kernel contains
  elements of the form $(-1, v)$ with $v \in \R^d$; any such element satisfies $v + M = x$. If
  we have a basis of $L$, we can again use linear algebra to recover such an element. Thus,
  the task is to find a basis of a lattice $L \subseteq \R^{d+1}$ of full rank.

  Finally, we want to note that using different distributions on the lattice vectors can lead to
  much better results which are simpler to obtain. For example, when using the discrete Gaussian
  distribution on the lattice points, a result similar to ours in Theorem~\ref{thm:main} follows
  from works by D.~Micciancio, O.~Regev \cite{micciancio-regev}, C.~Gentry, C.~Peikert and
  V.~Vaikuntanathan \cite{gentry-peikert-vaikuntanathan}. Unfortunately, it is not known how to
  sample from this distribution on a quantum computer, even in case a basis of the lattice is
  given. For our problem, where we want to determine a basis, we are only given an indirect
  description of the lattice. Therefore, these results cannot be used for solving the DLP without
  new ideas.

  \paragraph{Acknowledgments}
  P.W. gratefully acknowledges the support from the NSF grant CCF-0726771 and the NSF CAREER Award
  CCF-0746600. P.W. would also like to thank Joachim Rosenthal and his group members for their
  hospitality during his visit at the Institute of Mathematics, University of
  Zurich. F.F. gratefully acknowledges partial support form the SNF grant No.~132256. Both authors
  would like to thank A.~Schmidt for pointing out an error in an earlier preprint, both referees for
  their valuable comments, and S.~Elizalde and K.~Woods for pointing us to their paper
  \cite{elizalde-woods}.
  
\newcommand{\etalchar}[1]{$^{#1}$}

\end{document}